\newcommand{\mR}{\mathbf{R}}                    % Formatting for R
\newcommand{\mC}{\mathbf{C}}                    % Formatting for C
\newcommand{\abs}[1]{\lvert #1 \rvert}          % Formatting for the absolute value
\newcommand{\norm}[1]{\lVert #1 \rVert}         % Formatting for the norm
\newcommand{\br}[1]{\langle #1 \rangle}         % Formatting for the inner product
\newcommand{\eps}{\varepsilon}
\newcommand{\tphi}{\tilde{\varphi}}
\newcommand{\hphi}{\hat{\varphi}}
\newcommand{\vphi}{\varphi}
\newcommand{\re}{\mathrm{Re}}
\newcommand{\im}{\mathrm{Im}}
\newcommand{\supp}{\mathrm{supp}}
\newcommand{\closure}[1]{\overline{#1}}
\newcommand{\mOp}{\mathrm{Op}}
\theoremstyle{definition}
\newtheorem{thm}{Theorem}[section]
\newtheorem{prop}[thm]{Proposition}
\newtheorem{lemma}[thm]{Lemma}
\newtheorem*{remark}{Remark}
\numberwithin{equation}{section}
\title[Inverse problems with partial data for a Dirac system]{Inverse problems with partial data for a Dirac system: a Carleman estimate approach}
\author{Mikko Salo and Leo Tzou}
\address{Department of Mathematics and Statistics \\ University of Helsinki}
\email{mikko.salo@helsinki.fi}
\address{Department of Mathematics \\ Stanford University}
\email{leo.tzou@gmail.com}
\thanks{M.S.~is supported by the Academy of Finland.}
\thanks{L.T.~is supported by NSF grant DMS-0807502.}
\begin{document}

\begin{abstract}
We prove that the material parameters in a Dirac system with magnetic and electric potentials are uniquely determined by measurements made on a possibly small subset of the boundary. The proof is based on a combination of Carleman estimates for first and second order systems, and involves a reduction of the boundary measurements to the second order case. For this reduction a certain amount of decoupling is required. To effectively make use of the decoupling, the Carleman estimates are established for coefficients which may become singular in the asymptotic limit.
\end{abstract}

\maketitle

\section{Introduction}

This article is concerned with the inverse problem of determining unknown coefficients in a Dirac system from measurements made on part of the boundary. A standard problem of this type is the inverse conductivity problem of Calder{\'o}n \cite{calderon}, where the purpose is to determine the electrical conductivity of a body by making voltage to current measurements on the boundary. In mathematical terms, if $\gamma$ is a smooth positive function in the closure of a bounded domain $\Omega \subseteq \mR^n$, the boundary measurements are given by the Cauchy data set 
\begin{equation*}
C_{\gamma} = \{ (u|_{\partial \Omega}, \gamma \partial_{\nu} u|_{\partial \Omega}) \,;\, \nabla \cdot (\gamma \nabla u) = 0 \text{ in } \Omega, \ u \in H^1(\Omega) \}.
\end{equation*}
Here $u|_{\partial \Omega}$ and $\gamma \partial_{\nu} u|_{\partial \Omega}$ are the voltage and current, respectively, on $\partial \Omega$, corresponding to a potential $u$ satisfying the conductivity equation in $\Omega$ ($\partial_{\nu} u$ denotes the normal derivative). The inverse problem is to determine the conductivity $\gamma$ from the knowledge of the Cauchy data set $C_{\gamma}$.

The inverse conductivity problem has been well studied, and major results include \cite{astalapaivarinta}, \cite{nachman2d}, \cite{sylvesteruhlmann} which prove that $C_{\gamma}$ determines $\gamma$ in various settings. Less is known about the partial data problem, where one is given two sets $\Gamma_1, \Gamma_2 \subseteq \partial \Omega$ and the boundary measurements are encoded by the set 
\begin{equation*}
C_{\gamma}^{\Gamma_1,\Gamma_2} = \{ (u|_{\Gamma_1}, \gamma \partial_{\nu} u|_{\Gamma_2}) \,;\, \nabla \cdot (\gamma \nabla u) = 0 \text{ in } \Omega, \ u \in H^1(\Omega) \}.
\end{equation*}

There are two main approaches for proving that $\gamma$ is determined by $C_{\gamma}^{\Gamma_1,\Gamma_2}$. The first approach, introduced in \cite{bukhgeimuhlmann} and \cite{ksu}, uses Carleman estimates with boundary terms to control solutions on parts of the boundary. The result in \cite{ksu} is valid in dimensions $n \geq 3$ and for small sets $\Gamma_2$ (the shape depending on the geometry of $\partial \Omega$), but assumes that $\Gamma_1$ has to be relatively large. The second approach \cite{isakov} is based on reflection arguments and is valid when $n \geq 3$ and $\Gamma_1 = \Gamma_2$ and $\Gamma_1$ may be a small set, but it is limited to the case where $\partial \Omega \smallsetminus \Gamma_1$ is part of a hyperplane or a sphere. If $n=2$, a result similar to \cite{isakov} but without the last restriction was recently proved in \cite{iuy}.

We are interested in inverse problems with partial data for elliptic linear systems. In the case of full data (that is, $\Gamma_1 = \Gamma_2 = \partial \Omega$), there is an extensive literature including uniqueness results for the Maxwell equations \cite{ops}, \cite{os}, the Dirac system \cite{nakamuratsuchida}, \cite{salotzou}, and the elasticity system \cite{eskinralston_elasticity}, \cite{nakamurauhlmann}, \cite{nakamurauhlmannerratum}. However, it seems that partial data results for systems are more difficult to establish. The reflection approach is in principle more straightforward to extend to systems, and the recent work \cite{cos} gives a partial data result analogous to \cite{isakov} for the Maxwell equations. As for the Carleman estimate approach, there is a fundamental problem since Carleman estimates for first order systems, such as the ones in \cite{salotzou}, seem to have boundary terms which are not useful in partial data results.

In this paper, we prove a partial data result analogous to \cite{ksu} for a Dirac system. To our knowledge this is the first such partial data result for a system. The proof is based on Carleman estimates, and it involves a reduction to boundary measurements for a second order equation. The corresponding boundary term is handled by a Carleman estimate for second order systems, designed to take into account the amount of decoupling present in the original equation. In the set where one cannot decouple, we need to use the first order structure as well. The Carleman estimates need to be valid for coefficients which may blow up in the asymptotic limit, in order to obtain sufficiently strong estimates for solutions on the boundary.

Let us now state the precise problem. We consider the free Dirac operator in $\mR^3$, arising in quantum mechanics and given by the $4 \times 4$ matrix 
\begin{equation} \label{paulidirac_definition}
P(D) = \left( \begin{array}{cc} 0 & \sigma \cdot D \\ \sigma \cdot D & 0 \end{array} \right),
\end{equation}
where $D = -i\nabla$ and $\sigma = (\sigma_1, \sigma_2, \sigma_3)$ is a vector of Pauli matrices with 
\begin{equation*}
\sigma_1 = \left( \begin{array}{cc} 0 & 1 \\ 1 & 0 \end{array} \right), \ \sigma_2 = \left( \begin{array}{cc} 0 & -i \\ i & 0 \end{array} \right), \ \sigma_3 = \left( \begin{array}{cc} 1 & 0 \\ 0 & -1 \end{array} \right).
\end{equation*}
Let $\Omega \subseteq \mR^3$ be a bounded simply connected domain with $C^{\infty}$ boundary, let $A \in C^{\infty}(\closure{\Omega} ; \mR^3)$ be a vector field (magnetic potential), and let $q_{\pm}$ be two functions in $C^{\infty}(\closure{\Omega} ; \mR)$ (electric potentials). We will study a boundary value problem for the Dirac operator 
\begin{equation} \label{lv_definition}
\mathcal{L}_V = P(D) + V,
\end{equation}
where the potential $V$ has the form 
\begin{equation} \label{v_definition}
V = P(A) + Q = \left( \begin{array}{cc} q_+ I_2 & \sigma \cdot A \\ \sigma \cdot A & q_- I_2\end{array} \right),
\end{equation}
with $Q = \left( \begin{smallmatrix} q_+ I_2 & 0 \\ 0 & q_- I_2 \end{smallmatrix} \right)$.

Let $u$ be a $4$-vector $u = \left( \begin{smallmatrix} u_+ \\ u_- \end{smallmatrix} \right)$ where $u_{\pm} \in L^2(\Omega)^2$. By \cite[Section 4]{nakamuratsuchida}, the boundary value problem 
\begin{equation*}
\left\{ \begin{array}{rll}
\mathcal{L}_V u &\!\!\!= 0 & \quad \text{in } \Omega, \\
u_+ &\!\!\!= f & \quad \text{on } \partial \Omega,
\end{array} \right.
\end{equation*}
is well posed if $0$ is in the resolvent set of $\mathcal{L}_V$, and then there is a unique solution $u \in H^1(\Omega)^4$ for any $f \in H^{1/2}(\partial \Omega)^2$. The boundary measurements are given by the Dirichlet-to-Dirichlet map 
\begin{equation*} %\label{ddmap}
\Lambda_V: H^{1/2}(\partial \Omega) \to H^{1/2}(\partial \Omega), \ f \mapsto u_-|_{\partial \Omega}.
\end{equation*}
It is known that the map $\Lambda_V$ is preserved under a gauge transformation where $A$ is replaced by $A + \nabla p$ where $p|_{\partial \Omega} = 0$. Such a transformation does not change the magnetic field $\nabla \times A$, and the inverse problem is to recover the quantities $\nabla \times A$ and $q_{\pm}$ from the boundary measurements.

We are interested in the inverse problem with partial data, where the boundary information is the map $\Lambda_V$ restricted to a subset $\Gamma \subseteq \partial \Omega$. More generally, 
we can consider boundary measurements given by the restricted Cauchy data set 
\begin{equation*}
C_V^{\Gamma} = \{ (u_+|_{\partial \Omega}, u_-|_{\Gamma}) \,;\, u \in H^1(\Omega)^4 \text{ is a solution of } \mathcal{L}_V u = 0 \text{ in } \Omega \}.
\end{equation*}
If $0$ is in the resolvent set of $\mathcal{L}_V$, then $C_V^{\Gamma} = \{ (f, \Lambda_V f|_{\Gamma}) \,;\, f \in H^{1/2}(\partial \Omega)^2 \}$. Again, the set $C_V^{\Gamma}$ is preserved when $A$ is replaced by $A + \nabla p$ where $p|_{\partial \Omega} = 0$, so the inverse problem is to determine $\nabla \times A$ and $q_{\pm}$ from $C_V^{\Gamma}$.

We will prove the following partial data result. Let $\text{ch}(\closure{\Omega})$ be the convex hull of $\closure{\Omega}$, and if $x_0 \in \mR^3$ define the front face of $\partial \Omega$ by 
\begin{equation*}
F(x_0) = \{ x \in \partial \Omega \,;\, (x-x_0) \cdot \nu(x) \leq 0 \}.
\end{equation*}
If $\Gamma \subseteq \partial \Omega$, we write $\Gamma^c = \partial \Omega \smallsetminus \Gamma$ for the complement in $\partial \Omega$.

\begin{thm} \label{thm:uniqueness}
Let $\Omega \subseteq \mR^3$ be a bounded simply connected domain with connected $C^{\infty}$ boundary, let $A_1, A_2 \in C^{\infty}(\closure{\Omega} ; \mR^3)$, and let $q_{1,\pm}, q_{2,\pm} \in C^{\infty}(\closure{\Omega} ; \mR)$. Let $\Gamma$ be any neighborhood of $F(x_0)$ in $\partial \Omega$, where $x_0 \notin \text{ch}(\closure{\Omega})$, and assume the boundary conditions 
\begin{eqnarray}
 & A_1 = A_2 \quad \text{on } \partial \Omega, & \label{boundarycond1} \\
 & q_{1,\pm} = q_{2,\pm} \text{ and } \partial_{\nu} q_{1,\pm} = \partial_{\nu} q_{2,\pm} \text{ on } \partial \Omega, & \label{boundarycond2} \\
 & q_{1,-} \neq 0 \quad \text{on } \Gamma^c. & \label{boundarycond3}
\end{eqnarray}
If $C_{V_1}^{\Gamma} = C_{V_2}^{\Gamma}$, then $\nabla \times A_1 = \nabla \times A_2$ and $q_{1,\pm} = q_{2,\pm}$ in $\Omega$.
\end{thm}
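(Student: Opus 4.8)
The plan is to adapt the partial-data Carleman strategy of \cite{ksu} to the Dirac system, exploiting the block structure of $\mathcal{L}_V$: on the region where the system decouples one reduces to a magnetic Schr\"odinger equation, while where it does not one keeps the first order structure. First, suppose $C_{V_1}^\Gamma = C_{V_2}^\Gamma$. Given any solution $u_1 \in H^1(\Omega)^4$ of $\mathcal{L}_{V_1} u_1 = 0$, pick a solution $u_2$ of $\mathcal{L}_{V_2} u_2 = 0$ with the same partial Cauchy data, and set $w = u_1 - u_2$, so $\mathcal{L}_{V_2} w = (V_2 - V_1) u_1$ with $w_+|_{\partial\Omega} = 0$ and $w_-|_\Gamma = 0$. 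Since $P(D)$ and the Hermitian matrices $V_j$ are formally self-adjoint, Green's formula for $P(D)$ applied to $w$ and any solution $v$ of $\mathcal{L}_{V_2} v = 0$ gives
\begin{equation*}
\int_\Omega (V_1 - V_2) u_1 \cdot \overline{v}\, dx = \int_{\Gamma^c} \mathcal{B}(w, v)\, dS,
\end{equation*}
where $\mathcal{B}(w,v)$ is bilinear in the boundary traces of $w$ and $v$ and is supported in $\Gamma^c$, because $w_+$ vanishes on all of $\partial\Omega$ and $w_-$ vanishes on $\Gamma$.

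Next, the decoupling. Writing $\mathcal{L}_V u = 0$ as the system $\sigma \cdot (D + A) u_- + q_+ u_+ = 0$ and $\sigma \cdot (D + A) u_+ + q_- u_- = 0$, one sees that on $\{q_- \neq 0\}$ — which, by \eqref{boundarycond2} and \eqref{boundarycond3}, contains a neighborhood of $\Gamma^c$ in $\closure\Omega$ — the component $u_-$ is slaved to $u_+$, namely $u_- = -q_-^{-1} \sigma \cdot (D + A) u_+$, and $u_+$ satisfies the second order equation $\sigma \cdot (D + A)\bigl(q_-^{-1}\sigma \cdot (D + A) u_+\bigr) = q_+ u_+$, a magnetic Schr\"odinger type operator with principal part $-q_-^{-1}\Delta$ and lower order coefficients built from $A$, $q_+$, $q_-^{-1}$ and $\nabla q_-^{-1}$. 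This is the decoupled regime, used near $\Gamma^c$; where $q_-$ is small no such reduction is available and the full first order system must be retained.

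For the solutions, take the limiting Carleman weight $\varphi(x) = \log|x - x_0|$ attached to $x_0 \notin \mathrm{ch}(\closure\Omega)$, whose front face is exactly $F(x_0) = \{(x - x_0)\cdot\nu \leq 0\}$. Using a (convexified) Carleman estimate for the conjugated Dirac operator, construct complex geometric optics solutions $u_1 = e^{\varphi/h}(a_1 + r_1)$ of $\mathcal{L}_{V_1} u_1 = 0$ and $v = e^{-\varphi/h}(a_2 + r_2)$ of $\mathcal{L}_{V_2} v = 0$, the leading amplitudes $a_j$ solving transport equations (the magnetic phase $A \cdot (\nabla\varphi/h)$ being removed by an auxiliary $\dbar$-equation, as in the magnetic Schr\"odinger case), and the remainders satisfying $\norm{r_j}_{L^2} = O(h)$. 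The delicate point is that the reduction above forces one to work with operators whose coefficients blow up as $h \to 0$ (through $q_-^{-1}$ and the $h$-rescaling), so the required Carleman estimates — a second order estimate valid where $q_- \neq 0$ and a first order estimate valid everywhere — must be established with explicit, favorable dependence on these singular coefficients and then patched together.

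Finally, remove the boundary term and pass to the limit. Since $\Gamma$ is a neighborhood of $F(x_0)$, the complement $\Gamma^c$ lies where $\partial_\nu\varphi > 0$, so the boundary Carleman estimates — applied both to the CGO solution $v$ and, using the vanishing of $w_+$ on $\partial\Omega$ and of $w_-$ on $\Gamma$, to the difference $w$ — control the weighted traces entering $\mathcal{B}(w, v)$ on $\Gamma^c$ by $O(h^{1/2})$, hence $\int_{\Gamma^c}\mathcal{B}(w,v)\,dS \to 0$. Substituting the CGO solutions into the identity above, the factors $e^{\pm\varphi/h}$ cancel and the resulting oscillatory integral, after fixing the gauge with \eqref{boundarycond1} and the simple connectedness of $\Omega$, recovers the Fourier data of $V_1 - V_2$: a first passage to the limit yields $\nabla \times A_1 = \nabla \times A_2$, so $A_1 - A_2 = \nabla p$ with $p|_{\partial\Omega}$ constant; gauging $p$ away and taking a second limit gives $q_{1,\pm} = q_{2,\pm}$. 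I expect the main obstacle to be exactly what the introduction flags: there is no single second order reduction valid on all of $\Omega$, since the decoupling degenerates on $\{q_- = 0\}$. One must therefore run the second order Carleman machinery where $q_- \neq 0$ (in particular near $\Gamma^c$, where the boundary terms live) and the weaker first order machinery elsewhere, with both estimates robust enough to absorb coefficients of size $O(h^{-N})$, and glue them so that the $\Gamma^c$ contribution still vanishes in the limit — making the two estimates simultaneously strong enough and compatible at the interface between the two regimes is the heart of the argument.
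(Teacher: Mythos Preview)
Your outline captures the overall architecture correctly --- integral identity, CGO solutions with the logarithmic weight, and a patched Carleman argument to kill the $\Gamma^c$ boundary term --- but there are two genuine gaps.

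\textbf{The recovery of $q_+$ is not as you describe.} You write that ``taking a second limit gives $q_{1,\pm} = q_{2,\pm}$,'' but this is precisely where the direct CGO argument fails. After recovering $\nabla\times A_1 = \nabla\times A_2$ and gauging, the integral identity \eqref{matrix_integral_identity} is a $4\times 4$ matrix identity. From the upper left and upper right $2\times 2$ blocks one can indeed obtain $q_{1,-}=q_{2,-}$ everywhere and $q_{1,+}=q_{2,+}$ \emph{only where $q_{1,-}\neq 0$}. To get $q_{1,+}=q_{2,+}$ at the remaining points one would need the lower right block, but the corresponding boundary-term estimate does not go through with the available machinery (the paper says so explicitly). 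The actual argument is indirect: once $A_1=A_2$, $q_{1,-}=q_{2,-}$, and $q_{1,+}=q_{2,+}$ on a neighborhood of $\Gamma^c$ (which holds since $q_{1,-}\neq 0$ there by \eqref{boundarycond3}), a unique continuation argument for the decoupled second-order equation shows that $C_{V_1}^{\partial\Omega}=C_{V_2}^{\partial\Omega}$, and then the full-data result of \cite{nakamuratsuchida} finishes the job. Your proposal omits this step entirely.

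\textbf{The Carleman patching is more delicate than ``robust enough to absorb $O(h^{-N})$ coefficients.''} The coefficients of the decoupled second-order operator do not blow up polynomially but are cut off at the $h$-dependent threshold $|1/q_{2,-}|\leq \sqrt{|\log h^\alpha|}$, and the convexification parameter is taken as $\varepsilon(h)=(C_0|\log h^\alpha|)^{-1}$ so that the gain $h^2/\varepsilon(h)$ in the second-order Carleman estimate exactly dominates the loss from the singular first-order coefficients. In the complementary region $\Omega\smallsetminus S_h$ one cannot use the second-order reduction at all; instead one rewrites $(-\Delta+\hat A_2+\hat q_2)W_+$ using the Dirac system, producing a term in $W_-$, and then applies a \emph{first-order} Carleman estimate (Lemma~\ref{lemma:carleman_dirac}) to $\chi W_-$ after a cutoff near $\Gamma^c$. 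The resulting bound feeds back a term $h(\|e^{-\hat\varphi/h}W_+\|^2+\|e^{-\hat\varphi/h}hDW_+\|^2)$ that is absorbed by the left side of the second-order estimate. The logarithmic scales are what make this loop close; polynomial blowup would not. Your sketch identifies the need to patch but not the mechanism that makes it work, and you also need two distinct decay rates for the boundary term ($o(1)$ for the magnetic step, $o(h)$ for the electric step), not a single $O(h^{1/2})$.
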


In the full data case (when $\Gamma = \partial \Omega$), the inverse boundary problem for the Dirac system and the related fixed frequency inverse scattering problem have been considered in \cite{gotodirac}, \cite{isozakidirac}, \cite{lidirac}, \cite{nakamuratsuchida}, \cite{salotzou}, \cite{tsuchida}. In particular, Theorem \ref{thm:uniqueness} for full data was proved in \cite{nakamuratsuchida} for smooth coefficients and in \cite{salotzou} for Lipschitz continuous coefficients. For $\Gamma = \partial \Omega$ the boundary conditions \eqref{boundarycond1}--\eqref{boundarycond3} are not required, but for partial data results based on Carleman estimates as in \cite{bukhgeimuhlmann}, \cite{ksu} such conditions are usually needed at least on the inaccessible part $\Gamma^c$. By suitable boundary determination results and gauge transformations as in \cite{salotzou}, we expect that it would be enough to assume \eqref{boundarycond1} only for the tangential components of $A_1$ and $A_2$ on $\Gamma^c$ and \eqref{boundarycond2} only on $\Gamma^c$.

The most interesting condition is \eqref{boundarycond3}, which allows to decouple the Dirac system at least on some neighborhood of the inaccessible part $\Gamma^c$. This decoupling is required for the reduction from boundary measurements for Dirac to boundary measurements for a second order system, and also in patching the Carleman estimates for first and second order systems together to obtain decay for solutions on part of the boundary.

Let us outline the structure of the proof. In Section \ref{sec:integral_identity}, it is shown that the assumption $C_{V_1}^{\Gamma} = C_{V_2}^{\Gamma}$ along with \eqref{boundarycond1}--\eqref{boundarycond3} implies the integral identity
\begin{equation} \label{intro_identity}
\int_{\Omega} U_2^* (V_1-V_2) U_1 \,dx = -\int_{\Gamma^c} \frac{1}{q_{1,-}} U_{2,+}^* \partial_{\nu} U_+ \,dS
\end{equation}
where $U_1$ and $U_2$ are any $4 \times 4$ matrix solutions of $\mathcal{L}_{V_j} U_j = 0$ in $\Omega$, and further $U = U_1 - \tilde{U}_2$ where $\tilde{U}_2$ is a solution of $\mathcal{L}_{V_2} \tilde{U}_2 = 0$ in $\Omega$ with $\tilde{U}_{2,+}|_{\partial \Omega} = U_{1,+}|_{\partial \Omega}$. The normal derivative $\partial_{\nu} U_+$ corresponds to boundary measurements for a second order equation.

The matrices $U_1$ and $U_2$ will be complex geometrical optics solutions to the Dirac equation, depending on a small parameter $h$ and having logarithmic Carleman weights as phase functions. Such solutions were constructed for the Schr\"odinger equation in \cite{ksu} and for the Dirac equation in \cite{salotzou}. The construction relevant to this paper is presented in Section \ref{sec:cgo}.

The recovery of coefficients is given in Section \ref{sec:uniqueness}, and proceeds by inserting the complex geometrical optics solutions $U_1$ and $U_2$ into \eqref{intro_identity} and by letting $h \to 0$. With suitable choices, on the left hand side one obtains (nonlinear) two-plane transforms of the parameters involved, and microlocal analytic methods allow to determine the coefficients. The argument is an analog for the Dirac operator of results in \cite{dksu}, and also involves ideas from \cite{nakamuratsuchida}, \cite{salotzou}.

The remaining issue, and also the main contribution of this paper, is the analysis in terms of decay in $h$ of different parts of the boundary term in \eqref{intro_identity}. This is done in Section \ref{sec:carleman}. By a Carleman estimate, we may estimate $\partial_{\nu} U_+$ by a second order operator applied to $U_+$. We will apply an $h$-dependent decomposition of $\Omega$ into a set where $q_{2,-}$ is not too small (so one can decouple) and where $q_{2,-}$ is small, and the second order operator will be chosen accordingly. The coefficients of this operator will typically blow up when $h$ becomes very small.

The second order Carleman estimate is given for a phase function which is convexified by a parameter $\eps$ as in \cite{dksu} and \cite{ksu}, but there is the new feature that $\eps$ needs to depend on $h$ in a precise manner related to the decomposition of $\Omega$ to obtain sufficiently strong control of constants in the estimate. In the set where $q_{2,-}$ is small, we also use a Carleman estimate for the Dirac operator to obtain the final bounds.

More precisely, the proof of Theorem \ref{thm:uniqueness} proceeds in several steps. Noting that \eqref{intro_identity} is an identity for $4 \times 4$ matrices, the proof begins by looking at the upper right $2 \times 2$ blocks in \eqref{intro_identity} and by showing that $\nabla \times A_1 = \nabla \times A_2$. After a gauge transformation one may assume that $A_1 = A_2$, and then from the upper left and right $2 \times 2$ blocks of \eqref{intro_identity} one obtains that $q_{1,-} = q_{2,-}$, and also $q_{1,+} = q_{2,+}$ at all points where $q_{1,-}$ is nonzero. The coefficients $q_+$ would be recovered from the lower right $2 \times 2$ block of the integral identity, but the estimates for this block in the boundary term seem to be difficult. However, at this point one has enough information on the coefficients to go back to the Dirac equation and use unique continuation, so that the partial data problem can be reduced to the full data problem. Then the result of \cite{nakamuratsuchida} shows that $q_{1,+} = q_{2,+}$ everywhere, which ends the proof.

Finally, we remark that there is a large literature on Carleman estimates and unique continuation, also involving logarithmic weights. We refer to \cite{berthier_dirac}, \cite{jerison_dirac}, \cite{mandache_dirac} for such results for Dirac operators. Inverse problems for Dirac operators in time domain are discussed in \cite{kurylevlassas}.

\section{Integral identity} \label{sec:integral_identity}

The following integral identity will be used to determine the coefficients. We write $(u|v) = \int_{\Omega} v^* u \,dx$, $\norm{u}^2 = (u|u)$, and $(u|v)_{\Sigma} = \int_{\Sigma} v^* u \,dS$ where $u$ and $v$ are vectors or matrices in $\overline{\Omega}$, and $\Sigma$ is a subset of $\partial \Omega$.

\begin{lemma} \label{lemma:integral_identity}
Assuming the conditions in Theorem \ref{thm:uniqueness}, one has the identity 
\begin{equation*}
((V_1-V_2)u_1|u_2) = -(\frac{1}{q_{1,-}} \partial_{\nu} u_+|u_{2,+})_{\Gamma^c}
\end{equation*}
for any solutions $u_j \in H^1(\Omega)^4$ of $(P(D)+V_j) u_j = 0$ in $\Omega$, where $u = u_1 - \tilde{u}_2$ is a function in $(H^2 \cap H^1_0(\Omega))^2 \times H^1(\Omega)^2$ satisfying $\partial_{\nu} u_+|_{\Gamma} = 0$, and $\tilde{u}_2 \in H^1(\Omega)^4$ is a solution of $(P(D)+V_2) \tilde{u}_2 = 0$ in $\Omega$ with $\tilde{u}_{2,+}|_{\partial \Omega} = u_{1,+}|_{\partial \Omega}$ and $\tilde{u}_{2,-}|_{\Gamma} = u_{1,-}|_{\Gamma}$.
\end{lemma}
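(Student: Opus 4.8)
The plan is to derive the integral identity by the standard "Alessandrini-type" computation adapted to the first-order Dirac system, but with careful bookkeeping of the boundary contributions so that everything that survives integration by parts is supported on $\Gamma^c$. I would start from the bilinear pairing $(P(D) v | w)$ and establish the integration-by-parts formula
\begin{equation*}
(P(D) v | w) - (v | P(D) w) = -i \int_{\partial \Omega} (\nu \cdot \alpha)\, v \cdot \overline{w}\, dS,
\end{equation*}
where $\nu \cdot \alpha$ denotes the appropriate boundary symbol built from the Pauli blocks; this follows since $P(D)$ is formally self-adjoint and its principal symbol is $\sum_j \alpha_j \xi_j$ with Hermitian $\alpha_j$. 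The key point is that, writing $v$ and $w$ in $+$/$-$ blocks, this boundary term couples the $+$ block of one function with the $-$ block of the other and vice versa.

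Next I would apply this with $v = u_1$ and $w = u_2$, using $P(D) u_j = -V_j u_j$, to get
\begin{equation*}
((V_2 - V_1) u_1 | u_2) = (P(D) u_1 | u_2) - (u_1 | P(D) u_2)^{\phantom{*}}\!\!\! \text{(boundary terms)},
\end{equation*}
more precisely an identity of the form $((V_1 - V_2) u_1 | u_2) = (\text{boundary pairing of } u_1, u_2)$, using that $V_j$ is Hermitian. I would then rewrite $u_1 = u + \tilde{u}_2$ on $\partial\Omega$ and use that $\tilde{u}_2$ solves $(P(D)+V_2)\tilde u_2 = 0$ with the same $+$-trace as $u_1$: the pairing of $\tilde u_2$ against $u_2$ contributes boundary terms that must be absorbed, and here one uses $C_{V_1}^\Gamma = C_{V_2}^\Gamma$. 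Concretely, the Cauchy data hypothesis gives that on $\Gamma$ the full Cauchy data of $u_1$ (as a Dirac solution) agrees with that of some $V_2$-solution, which one arranges to be $\tilde u_2$; this is exactly the point of the side conditions $\tilde u_{2,+}|_{\partial\Omega} = u_{1,+}|_{\partial\Omega}$ and $\tilde u_{2,-}|_\Gamma = u_{1,-}|_\Gamma$, so that $u = u_1 - \tilde u_2$ has vanishing $+$-trace on all of $\partial\Omega$ and vanishing $-$-trace on $\Gamma$. Hence $u_+ \in H^1_0$, which kills the boundary pairing of $u$ against $\tilde u_2$ (and against $u_2$) except for the part involving $\partial_\nu u_+$ against $u_{2,+}$ over $\Gamma^c$.

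The remaining step, and the one I expect to be the main obstacle, is to turn the surviving first-order boundary term into the stated form $-(\frac{1}{q_{1,-}}\partial_\nu u_+ | u_{2,+})_{\Gamma^c}$ and to justify the elliptic regularity claim $u_+ \in H^2$. This is where condition \eqref{boundarycond3} enters: on $\Gamma^c$ one has $q_{1,-}\neq 0$, so the $-$-component of the Dirac equation $(P(D)+V_1)u_1 = 0$ can be solved for $u_{1,-}$ in terms of $\sigma\cdot D\, u_{1,+}$ and lower-order terms, i.e. one decouples and expresses $u_{1,-}|_{\Gamma^c}$ (hence the relevant trace of $u_-$) algebraically through $\partial_\nu u_+$ up to tangential derivatives of $u_+$ that vanish because $u_+|_{\partial\Omega}=0$. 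The factor $\frac{1}{q_{1,-}}$ is precisely the constant produced by inverting the zeroth-order part of the lower Dirac block. For the $H^2$ regularity of $u_+$: since $u = u_1 - \tilde u_2$ satisfies a first-order system with smooth coefficients and $u_+|_{\partial\Omega}=0$, one substitutes the lower block into the upper block to see that $u_+$ solves a second-order elliptic equation (a magnetic Schrödinger-type equation) with an $L^2$ right-hand side and zero Dirichlet data, at least microlocally near $\Gamma^c$ where $q_{1,-}\neq 0$; elliptic regularity then gives $u_+\in H^2$ locally, and one patches with the interior estimate. Care is needed near $\partial\Gamma^c$ (the interface where the decoupling degenerates), but there $\partial_\nu u_+$ is multiplied by $1/q_{1,-}$ which stays bounded on $\Gamma^c$ by hypothesis, and the trace $u_{2,+}$ is smooth, so the boundary integral is well-defined. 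Assembling these pieces yields the identity as stated.
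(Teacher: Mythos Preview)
Your overall strategy matches the paper's: integrate by parts using the self-adjointness of $P(D)$ and $V_2$, insert $u_1 = u + \tilde u_2$, kill the $\tilde u_2$--$u_2$ boundary pairing because both solve the $V_2$ equation, and use $u_+|_{\partial\Omega}=0$, $u_-|_\Gamma=0$ so that only a term of the form $i((\sigma\cdot\nu)u_-\,|\,u_{2,+})_{\Gamma^c}$ survives, which is then converted to $\partial_\nu u_+$. Two points, however, are handled more cleanly in the paper and your version has a genuine gap in one of them.

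\textbf{The $H^2$ regularity.} Your decoupling argument only yields a second-order elliptic equation for $u_+$ where $q_-\neq 0$, and the ``patch with the interior estimate'' is not an argument: in the interior region where $q_-$ may vanish you have produced no equation for $u_+$ at all, so there is nothing to patch. The paper avoids this entirely by using the algebraic identity $P(D)^2=-\Delta I_4$: since $P(D)u = -V_1 u_1 + V_2\tilde u_2$, one gets $-\Delta u = P(D)(-V_1 u_1 + V_2\tilde u_2)\in L^2(\Omega)^4$ globally, with no reference to $q_-$. Then $u_+\in H^1_0(\Omega)^2$ and $-\Delta u_+\in L^2$ give $u_+\in H^2(\Omega)^2$ by standard elliptic regularity. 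This is both simpler and actually complete.

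\textbf{The boundary conversion.} Expressing $u_-$ via $\partial_\nu u_+$ requires more than $q_{1,-}\neq 0$ on $\Gamma^c$. One subtracts the lower Dirac rows for $u_1$ and $\tilde u_2$ to get $q_{1,-}u_{1,-}-q_{2,-}\tilde u_{2,-} = -\sigma\cdot D u_+ - (\sigma\cdot A_1)u_{1,+} + (\sigma\cdot A_2)\tilde u_{2,+}$ in $\Omega$; restricting to $\partial\Omega$ and using the boundary hypotheses $A_1=A_2$ and $q_{1,-}=q_{2,-}$ on $\partial\Omega$ (conditions \eqref{boundarycond1}--\eqref{boundarycond2}), together with $u_{1,+}=\tilde u_{2,+}$ on $\partial\Omega$, collapses this to $q_{1,-}u_- = -\sigma\cdot D u_+$ on $\partial\Omega$. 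Then $(Du_+)_{\mathrm{tan}}=0$ since $u_+|_{\partial\Omega}=0$, giving $i q_{1,-}(\sigma\cdot\nu)u_- = -\partial_\nu u_+$ on all of $\partial\Omega$. In particular this also yields the claim $\partial_\nu u_+|_\Gamma=0$ (from $u_-|_\Gamma=0$), which is part of the statement. You gestured at the tangential-derivative point but did not invoke \eqref{boundarycond1}--\eqref{boundarycond2}; without them the $A_j$ and $q_{j,-}$ terms do not cancel and the identity does not come out in the stated form.
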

\begin{proof}
Note that the existence of $\tilde{u}_2$ with the stated properties is ensured by the condition $C_{V_1}^{\Gamma} = C_{V_2}^{\Gamma}$. We first show that 
\begin{equation} \label{integralidentity_first}
((V_1-V_2)u_1|u_2) = i((\sigma \cdot \nu)(u_{1,-} - \tilde{u}_{2,-}) | u_{2,+})_{\Gamma^c}
\end{equation}
Since $(P(D)w_1|w_2) = (w_1|P(D)w_2) + \frac{1}{i} (P(\nu) w_1|w_2)_{\partial \Omega}$ and $V_2^* = V_2$, we have 
\begin{align*}
((V_1-V_2)u_1|u_2) &= -(P(D)u_1|u_2) + (u_1|P(D)u_2) \\
 &= i(P(\nu) u_1|u_2)_{\partial \Omega} \\
 &= i(P(\nu) (u_1 - \tilde{u}_2)|u_2)_{\partial \Omega} + i(P(\nu) \tilde{u}_2|u_2)_{\partial \Omega}.
\end{align*}
Then \eqref{integralidentity_first} follows since $(u_1 - \tilde{u}_2)_+|_{\partial \Omega} = 0$, $(u_1-\tilde{u}_2)_-|_{\Gamma} = 0$, and 
\begin{align*}
i(P(\nu) \tilde{u}_2|u_2)_{\partial \Omega} &= (\tilde{u}_2|P(D)u_2) - (P(D)\tilde{u}_2|u_2) \\
 &= (V_2 \tilde{u}_2|u_2) - (\tilde{u}_2|V_2 u_2) \\
 &= 0.
\end{align*}

Now $u \in H^1(\Omega)^4$ with $-\Delta u = P(D)(-V_1 u_1 + V_2 \tilde{u}_2) \in L^2(\Omega)^4$, and since $u_+ \in H^1_0(\Omega)^2$ we obtain $u_+ \in H^2(\Omega)^2$ by elliptic regularity. It remains to show that 
\begin{equation} \label{ddmap_normalderivative}
i q_{1,-} (\sigma \cdot \nu) u_- = -\partial_{\nu} u_+ \quad \text{on } \partial \Omega.
\end{equation}
Since $u_1$ and $\tilde{u}_2$ are solutions, we have 
\begin{align*}
\sigma \cdot (D+A_1) u_{1,+} + q_{1,-} u_{1,-} &= 0, \\
\sigma \cdot (D+A_2) \tilde{u}_{2,+} + q_{2,-} \tilde{u}_{2,-} &= 0.
\end{align*}
This shows that 
\begin{equation*}
q_{1,-} u_{1,-} - q_{2,-} \tilde{u}_{2,-} = -\sigma \cdot Du_+ - (\sigma \cdot A_1) u_{1,+} + (\sigma \cdot A_2) \tilde{u}_{2,+} \quad \text{in }\Omega.
\end{equation*}
Restricting to $\partial \Omega$ and using the boundary conditions on the coefficients, and writing $Du_+ = -i(\partial_{\nu} u_+)\nu + (Du_+)_{\text{tan}}$ on the boundary, where $A_{\text{tan}}$ is the tangential component of a vector field $A$, we obtain 
\begin{equation*}
q_{1,-} u_- = - \sigma \cdot Du_+ = i (\sigma \cdot \nu) \partial_{\nu} u_+ - \sigma \cdot (Du_+)_{\text{tan}} \quad \text{on } \partial \Omega.
\end{equation*}
Since $u_+ = 0$ on $\partial \Omega$ we have $(Du_+)_{\text{tan}} = 0$ on $\partial \Omega$, and \eqref{ddmap_normalderivative} follows upon multiplying the last identity by $i(\sigma \cdot \nu)$.
\end{proof}

\section{Construction of solutions} \label{sec:cgo}

The recovery of coefficients will proceed by inserting complex geometrical optics solutions $u_1$ and $u_2$ into the identity in Lemma \ref{lemma:integral_identity}. These solutions depend on a small parameter $h > 0$, and have the form $u = e^{-\rho/h} m$ where $\rho$ is a complex phase function and $m$ has an explicit form when $h \to 0$.

For second order elliptic equations, complex geometrical optics solutions go back to \cite{calderon}, \cite{sylvesteruhlmann} in the case where $\rho$ is a linear function, and they have been used extensively in inverse problems for different equations (see the surveys \cite{uhlmannicm}, \cite{uhlmannselecta}). A more general construction was presented in \cite{ksu}, allowing phase functions $\rho = \varphi + i\psi$ where $\varphi$ is a so called limiting Carleman weight and $\psi$ solves a related eikonal equation. See \cite{DKSaU} for a characterization of the limiting weights. In \cite{ksu}, the logarithmic weights $\varphi(x) = \log\,\abs{x-x_0}$ were used to obtain results in the inverse conductivity problem with partial data.

For the Dirac system considered in this article, a construction of complex geometrical optics solutions was given in \cite{salotzou}. This construction, specialized to logarithmic Carleman weights, will be reviewed here. Let $A, q_{\pm}$ be coefficients in $C^{\infty}(\closure{\Omega})$. Instead of $4$-vector solutions we will use $4 \times 4$ matrix solutions $U$ (so that every column of $U$ is a solution) to $\mathcal{L}_V U = 0$ in $\Omega$, having the form 
\begin{equation} \label{Usolutionform}
U = e^{-\rho/h}(C_0 + h C_1 + h^2 R).
\end{equation}
Here $h$ is a small parameter, $\rho = \varphi + i\psi$ is a complex phase function satisfying the eikonal equation $(\nabla \rho)^2 = 0$, $C_0$ and $C_1$ are smooth matrices with explicit form, and $R$ is a correction term.

We move to the specific choices of $\rho$ and $C_j$, following \cite{dksu} and \cite{salotzou}. Fix a point $x_0 \in \mR^3 \smallsetminus \closure{\text{ch}(\Omega)}$, where $\text{ch}(\Omega)$ is the convex hull of $\Omega$, and let $\varphi(x) = \log\,\abs{x-x_0}$. We choose 
\begin{equation*}
\psi(x) = \text{dist}_{S^2}\left( \frac{x-x_0}{\abs{x-x_0}}, \omega \right),
\end{equation*}
where $\omega \in S^2$ is chosen so that $\psi$ is smooth near $\closure{\Omega}$. Then $\rho = \varphi + i\psi$ satisfies $(\nabla \rho)^2 = 0$ near $\closure{\Omega}$.

It will be convenient to make a change of coordinates as in \cite{dksu}. Choose coordinates so that $x_0 = 0$, $\omega = e_1$, and $\closure{\Omega} \subseteq \{x_3 > 0\}$. Write $x = (x_1,r e^{i\theta})$ where $r > 0$ and $\theta \in (0,\pi)$, and introduce the complex variable $z = x_1 + ir$. Also write $e_r = (0,\cos \theta,\sin \theta)$ and $\zeta = e_1 + i e_r$. In these coordinates one has 
\begin{eqnarray*}
 & \rho = \log\,z, \quad \nabla \rho = \frac{1}{z} \zeta, \quad \Delta \rho = -\frac{2}{z(z-\bar{z})}. & 
\end{eqnarray*}

As in \cite[Section 3]{salotzou}, the matrices $C_0$ and $C_1$ will be chosen to satisfy transport equations involving the Cauchy operator $\zeta \cdot D$. We will also use a function $\phi \in C^{\infty}(\closure{\Omega})$ solving 
\begin{equation*}
\zeta \cdot (\nabla \phi + A) = 0 \quad \text{in } \Omega.
\end{equation*}
A particular solution $\phi$ is obtained by extending $A$ smoothly into $\mR^3$ as a compactly supported vector field, and by letting $\phi = (\zeta \cdot \nabla)^{-1}(-\zeta \cdot A)$ where the Cauchy transform is defined by 
\begin{equation*}
(\zeta \cdot \nabla)^{-1} f(x) = \frac{1}{2\pi} \int_{\mR^2} \frac{1}{y_1+i y_2} f(x-y_1 \re\,\zeta - y_2 \im\,\zeta) \,dy_1 \,dy_2.
\end{equation*}
Below, we will always understand that $\phi$ is this solution. The extension of $A$ outside $\Omega$ will not play any role in the final results.

The following proposition gives the existence and required properties for complex geometrical optics solutions. We use the notation introduced above, and the notation 
\begin{equation*}
Q_I = \left( \begin{array}{cc} q_+ I_2 &  \\  & q_- I_2 \end{array} \right)_I = \left( \begin{array}{cc} q_- I_2 &  \\  & q_+ I_2 \end{array} \right).
\end{equation*}
We also write $A \lesssim B$ to denote that $A \leq CB$ where $C$ is a constant which does not depend on $h$.

\begin{prop} \label{prop:cgo}
Let $a \in C^{\infty}(\closure{\Omega})$ satisfy $(\zeta \cdot \nabla) a = 0$ in $\Omega$. Then for $h > 0$ sufficiently small, there exists a solution to $\mathcal{L}_V U = 0$ in $\Omega$ of the form \eqref{Usolutionform} where $\rho = \log\,z$, 
\begin{equation*}
C_0 = \frac{1}{z} P(\zeta) r^{-1/2} e^{i\phi} a
\end{equation*}
with $\zeta \cdot (\nabla \phi + A) = 0$ in $\Omega$, and 
\begin{equation*}
C_1 = \frac{1}{i}(P(D+A)-Q_I)(r^{-1/2} e^{i\phi} a) + \frac{1}{z} P(\zeta) \tilde{C_1}
\end{equation*}
with $\norm{\tilde{C}_1}_{W^{1,\infty}(\Omega)} \lesssim 1$. Further, we have 
\begin{equation*}
\norm{R}_{H^1(\Omega)} \lesssim 1.
\end{equation*}
\end{prop}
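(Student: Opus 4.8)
The plan is to construct $U$ by solving a transport system for the amplitudes $C_0,C_1$ and then absorbing the remainder into an $H^1$-bounded correction $R$ via a Carleman estimate with gain. First I would substitute the ansatz $U=e^{-\rho/h}(C_0+hC_1+h^2R)$ into $\mathcal{L}_V U=(P(D)+V)U=0$ and expand using $P(D)(e^{-\rho/h}m)=e^{-\rho/h}(\frac{1}{ih}P(\nabla\rho)m+P(D)m)$. Since $(\nabla\rho)^2=0$, the matrix $P(\nabla\rho)=P(\zeta)/z$ satisfies $P(\nabla\rho)^2=(\nabla\rho)^2 I_4=0$; this nilpotency is what makes the construction work. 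Collecting powers of $h$, the $h^{-1}$ term forces $P(\nabla\rho)C_0=0$, which is solved by taking $C_0$ in the range of $P(\nabla\rho)$, i.e.\ $C_0=\frac{1}{z}P(\zeta)(\text{something})$; the $h^0$ term gives a transport equation $P(\nabla\rho)C_1=-(P(D)+V)C_0$, solvable precisely when the right-hand side lies in the range of $P(\nabla\rho)=\ker P(\nabla\rho)$ (equal by nilpotency), which is the solvability condition to be verified.

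Next I would carry out the transport analysis explicitly in the $z$-coordinates. Writing $C_0=\frac{1}{z}P(\zeta)b$ with $b$ a scalar-valued (times identity) amplitude, the compatibility condition at order $h^0$ reduces, after using $P(\zeta)^2=0$ and the identity for $\Delta\rho$, to a $\bar\partial$-type (Cauchy) equation $(\zeta\cdot D)(\text{gauge}\cdot b)=0$ for an appropriate gauged amplitude. The factor $r^{-1/2}$ arises exactly to cancel the $\Delta\rho=-2/(z(z-\bar z))$ contribution (this is the standard computation from \cite{dksu}, \cite{salotzou}: $\zeta\cdot\nabla(r^{-1/2})$ produces the needed term), and the factor $e^{i\phi}$ with $\zeta\cdot(\nabla\phi+A)=0$ removes the magnetic term $\sigma\cdot A$ from the transport equation. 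Hence $C_0=\frac{1}{z}P(\zeta)r^{-1/2}e^{i\phi}a$ works for any $a$ with $(\zeta\cdot\nabla)a=0$. Then $C_1$ is obtained by inverting $P(\nabla\rho)$ on the remaining right-hand side: the term $(P(D+A)-Q_I)(r^{-1/2}e^{i\phi}a)$ is the part not in the range of $P(\zeta)$ (so it gets divided appropriately, producing the $\frac{1}{i}$ factor from $D=-i\nabla$), while the freedom in choosing the kernel component of $C_1$ is fixed by solving another Cauchy equation $(\zeta\cdot\nabla)\tilde C_1=(\text{bounded data})$, giving $\|\tilde C_1\|_{W^{1,\infty}}\lesssim 1$ by the mapping properties of the Cauchy transform $(\zeta\cdot\nabla)^{-1}$ on smooth compactly supported data.

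Finally, with $C_0,C_1$ chosen, the equation $\mathcal{L}_V U=0$ becomes an equation for $R$ of the form $e^{\rho/h}\mathcal{L}_V(e^{-\rho/h}h^2R)=-F$ where $F=(P(D)+V)C_1$ is smooth and $h$-independent in the relevant norm, so $\|F\|_{L^2}\lesssim 1$. Conjugating, this reads $(P(\nabla\rho)/ih + P(D)+V)R = -F/h^{2}\cdot$(harmless rescaling), or more usefully: I would invoke the solvability estimate for the conjugated Dirac operator $P_\rho:=e^{\rho/h}P(D)e^{-\rho/h}=P(D)+\frac{1}{ih}P(\nabla\rho)$ established in \cite{salotzou} (a first-order Carleman estimate with a gain of one power of $h$), which gives, for $h$ small, a solution $R$ with $\|R\|_{H^1(\Omega)}\lesssim \|F\|_{L^2(\Omega)}\lesssim 1$. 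The main obstacle is bookkeeping the precise algebraic identities: verifying the solvability conditions $(\text{RHS})\in\mathrm{im}\,P(\nabla\rho)$ at each order and checking that the stated $C_0,C_1$ really reproduce the transport equations of \cite{salotzou} after the logarithmic-weight specialization — in particular tracking how $r^{-1/2}$, $e^{i\phi}$, the $1/z$ prefactors, and the noncommuting matrix structure ($Q_I$ versus $Q$, the swap of $q_\pm$) interact. The analytic input (Carleman estimate with gain for $R$) is quoted from \cite{salotzou} and is not re-proved here.
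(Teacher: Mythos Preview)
Your overall strategy---solve transport equations for $C_0,C_1$ and then absorb the remainder via the solvability estimate for the conjugated Dirac operator---is the right one, and your identification of the roles of $r^{-1/2}$ and $e^{i\phi}$ matches the paper. However, there is a genuine gap in the final step: the Carleman/solvability estimate from \cite{salotzou} for the conjugated Dirac operator does \emph{not} give $\norm{R}_{H^1(\Omega)}\lesssim\norm{F}_{L^2(\Omega)}$ uniformly in $h$. It gives only the semiclassical bound
\[
\norm{R}_{L^2(\Omega)} + h\norm{\nabla R}_{L^2(\Omega)} \lesssim \norm{F}_{L^2(\Omega)},
\]
so stopping the expansion at $C_1$ and dumping everything into $h^2R$ yields $\norm{R}_{L^2}\lesssim 1$ but only $\norm{\nabla R}_{L^2}\lesssim h^{-1}$, which is not the claimed $\norm{R}_{H^1}\lesssim 1$.

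The paper fixes exactly this by computing two more amplitude terms: one writes
\[
U = e^{-\rho/h}\bigl(C_0 + hC_1 + h^2C_2 + h^3C_3 + h^3R_4\bigr),
\]
solves further transport equations for $C_2,C_3$ (smooth, with $\norm{C_j}_{W^{1,\infty}}\lesssim 1$), and applies the semiclassical estimate to $R_4$ to get $\norm{R_4}_{L^2}+h\norm{\nabla R_4}_{L^2}\lesssim 1$. Then setting $R=C_2+hC_3+hR_4$ gives $\norm{R}_{H^1}\lesssim \norm{C_2}_{H^1}+h\norm{C_3}_{H^1}+h\norm{R_4}_{L^2}+h\norm{\nabla R_4}_{L^2}\lesssim 1$. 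The extra factor of $h$ in front of $R_4$ is precisely what converts the semiclassical gradient bound into an honest $H^1$ bound. Your proposal is missing this step.
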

\begin{proof}
To obtain the $H^1(\Omega)$ estimate for $R$, in fact we need to compute more terms in the asymptotic expansion in terms of $h$ and look for a solution of the form 
\begin{equation*}
U = e^{-\rho/h}(C_0 + h C_1 + h^2 C_2 + h^3 C_3 + h^3 R_4).
\end{equation*}
With the choices of smooth matrices $C_j$ given below, Proposition 3.1 in \cite{salotzou} implies the existence of such a solution with $\norm{C_j}_{W^{1,\infty}} \lesssim 1$ and $\norm{R_4}_{L^2(\Omega)} + h \norm{\nabla R_4}_{L^2(\Omega)} \lesssim 1$ if $h$ is small enough. We then obtain the required solution \eqref{Usolutionform} upon taking $R = C_2 + h C_3 + h R_4$.

The conditions for $C_j$ in \cite[Proposition 3.1]{salotzou} are 
\begin{eqnarray*}
 & C_0 = P(\nabla \rho) \tilde{C}_0, \quad M_A \tilde{C_0} = 0, & \\
 & C_1 = \frac{1}{i}(P(D+A)-Q_I) \tilde{C}_0 + P(\nabla \rho) \tilde{C}_1, \quad M_A \tilde{C}_1 = i H_{A,W} \tilde{C_0}, & \\
 & C_2 = \frac{1}{i}(P(D+A)-Q_I) \tilde{C}_1 + P(\nabla \rho) \tilde{C}_2, \quad M_A \tilde{C}_2 = i H_{A,W} \tilde{C_1}, & \\
 & C_3 = \frac{1}{i}(P(D+A)-Q_I) \tilde{C}_2. & 
\end{eqnarray*}
Here $M_A$ and $H_{A,W}$ are the transport and Schr\"odinger operators 
\begin{eqnarray*}
 & M_A = (2 \nabla \rho \cdot (D+A) + \frac{1}{i} \Delta \rho) I_4, & \\
 & H_{A,W} = (D+A)^2 I_4 + \begin{pmatrix} \sigma \cdot (\nabla \times A) - q_+ q_- I_2 & -\sigma \cdot Dq_+ \\ -\sigma \cdot D q_- & \sigma \cdot (\nabla \times A) - q_+ q_- I_2 \end{pmatrix}. & 
\end{eqnarray*}
Also, $\tilde{C}_j$ are smooth matrices in $\closure{\Omega}$ solving the transport equations.

Let $\phi$ and $a$ be as stated. Using the special coordinates, we have 
\begin{equation*}
M_A = \frac{1}{z} \left( 2\zeta \cdot (D+A) + \frac{1}{r} \right) I_4.
\end{equation*}
Then $\tilde{C}_0 = r^{-1/2} e^{i\phi} a I_4$ solves $M_A \tilde{C}_0 = 0$ in $\Omega$, and $C_0$ has the desired form. Now one can solve the transport equations $\tilde{C}_2$ and $\tilde{C}_3$ by the Cauchy transform for instance, and this shows that also $C_1$ is as required.
\end{proof}

\begin{remark}
It is possible to perform the above construction of solutions with $\rho$ replaced by $-\rho$ or $\bar{\rho}$, since these functions also solve the eikonal equation. The corresponding forms for the solutions are, respectively, 
\begin{align*}
U &= e^{\rho/h} \Big[ -\frac{1}{z} P(\zeta) r^{-1/2} e^{i\phi} a + \frac{h}{i}(P(D+A)-Q_I)(r^{-1/2} e^{i\phi} a) \\
 & \qquad \qquad - \frac{h}{z} P(\zeta) \tilde{C_1} + O(h^2) \Big], \\
U &= e^{-\bar{\rho}/h} \Big[ \frac{1}{\bar{z}} P(\bar{\zeta}) r^{-1/2} e^{i\bar{\phi}} \bar{a} + \frac{h}{i}(P(D+A)-Q_I)(r^{-1/2} e^{i\bar{\phi}} \bar{a}) \\
 & \qquad \qquad + \frac{h}{\bar{z}} P(\bar{\zeta}) \tilde{C_1} + O(h^2) \Big],
\end{align*}
where $\zeta \cdot (\nabla \phi + A) = 0$ and $\zeta \cdot \nabla a = 0$ in $\Omega$, and $\norm{\tilde{C}_1}_{W^{1,\infty}(\Omega)} \lesssim 1$.
\end{remark}

\section{Uniqueness proof} \label{sec:uniqueness}

In this section, we give the proof of Theorem \ref{thm:uniqueness} modulo the estimates for boundary terms which are contained in Section \ref{sec:carleman}. The following simple algebraic identities, valid for $a, b \in \mC^3$, will be used many times in the computations below:
\begin{eqnarray*}
 & (\sigma \cdot a)(\sigma \cdot b) + (\sigma \cdot b)(\sigma \cdot a) = 2(a \cdot b) I_2, \quad (\sigma \cdot a)^2 = (a \cdot a) I_2, & \\
 & P(a)P(b) + P(b)P(a) = 2(a \cdot b) I_4, \quad P(a)^2 = (a \cdot a) I_4, & \\
 & P(a)Q = Q_I P(a). & 
\end{eqnarray*}
Since $\zeta \cdot \zeta = 0$, we also have $(\sigma \cdot \zeta)^2 = 0$ and $P(\zeta)^2 = 0$.

The starting point for the recovery of the coefficients is Lemma \ref{lemma:integral_identity}, which implies that 
\begin{equation} \label{matrix_integral_identity}
((V_1-V_2)U_1|U_2) = -(\frac{1}{q_{1,-}} \partial_{\nu} U_+|U_{2,+})_{\Gamma^c}
\end{equation}
where $U_j$ are $4 \times 4$ matrix solutions of $\mathcal{L}_{V_j} U_j = 0$ in $\Omega$, $U = U_1-\tilde{U}_2$, and $\tilde{U}_2$ solves $\mathcal{L}_{V_2} \tilde{U}_2 = 0$ in $\Omega$ with $U_+|_{\partial \Omega} = 0$, $U_-|_{\Gamma} = 0$.

We use Proposition \ref{prop:cgo}, or more precisely the remark after it, and choose solutions $U_1$ and $U_2$ with 
\begin{align*}
U_1 &= e^{\rho/h} \Big[ -\frac{1}{z} P(\zeta) r^{-1/2} e^{i\phi_1} a_1 + \tilde{R}_1 \Big], \\
U_2^* &= e^{-\rho/h} \Big[ \frac{1}{z} a_2 P(\zeta) r^{-1/2} e^{-i\phi_2} + \tilde{R}_2 \Big],
\end{align*}
where $\zeta \cdot (\nabla \phi_j + A_j) = 0$ and $\zeta \cdot \nabla a_j = 0$ in $\Omega$, and where $\norm{\tilde{R}_1}_{H^1(\Omega)} \lesssim h$ and $\norm{\tilde{R}_2}_{H^1(\Omega)} \lesssim h$.

The next result, whose proof is given in the next section, takes care of part of the boundary term in \eqref{matrix_integral_identity}.

\begin{lemma} \label{lemma:boundaryterm_magnetic}
The upper right $2 \times 2$ block of $(\frac{1}{q_{1,-}} \partial_{\nu} U_+|U_{2,+})_{\Gamma^c}$ is $o(1)$ as $h \to 0$.
\end{lemma}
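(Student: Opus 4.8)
The plan is to bound the upper right $2\times 2$ block of $(\frac{1}{q_{1,-}} \partial_{\nu} U_+|U_{2,+})_{\Gamma^c}$ by first using a Carleman estimate to control $\partial_\nu U_+$ on $\Gamma^c$ in terms of a second order operator applied to $U_+$, and then exploiting the $H^1$ bounds on the remainder terms $\tilde R_j$ together with the explicit form of the leading terms. The key point is that $U_+ \in H^2 \cap H^1_0$ solves a second order equation $-\Delta U_+ = P(D)(-V_1 U_1 + V_2 \tilde U_2)_+$ with $U_+|_{\partial\Omega} = 0$ and $\partial_\nu U_+|_\Gamma = 0$ (from Lemma \ref{lemma:integral_identity}), so a Carleman estimate with the convexified logarithmic weight $\varphi_\eps = \varphi + \frac{\eps}{2}\varphi^2$ applied to $U_+$ gives a bound of the form
\begin{equation*}
\norm{e^{\varphi/h}\partial_\nu U_+}_{L^2(\Gamma^c)}^2 \lesssim \frac{h}{\eps}\Big( \norm{e^{\varphi/h}(-\Delta - \text{lower order}) U_+}^2 + \text{boundary terms on }\Gamma\Big),
\end{equation*}
where the $\Gamma$ boundary terms vanish since $\partial_\nu U_+|_\Gamma = 0$ and $U_+|_{\partial\Omega}=0$. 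I would choose the second order operator so that in the set where $q_{2,-}$ is not too small the Dirac system decouples, reducing $P(D)(-V_1 U_1 + V_2 \tilde U_2)_+$ essentially to a Schrödinger-type operator acting on $U_+$; this is exactly the role of the $h$-dependent decomposition of $\Omega$ announced in the introduction.

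Next I would estimate the interior term. Since $\mathcal{L}_{V_j} U_j = 0$ and $V_1 = V_2$ plus lower order differences that are $O(1)$, the quantity $(-V_1 U_1 + V_2 \tilde U_2)$ can be rewritten using the equations so that the worst contributions come from $U = U_1 - \tilde U_2$ itself and from the coefficient differences, which by \eqref{boundarycond1}--\eqref{boundarycond2} vanish to appropriate order on $\partial\Omega$. Pairing against $U_{2,+}$, which by Proposition \ref{prop:cgo} has the explicit leading form $\frac{1}{\bar z}P(\bar\zeta)r^{-1/2}e^{i\bar\phi_2}\bar a_2$ conjugated, and using $e^{\varphi/h} = \abs{z}^{1/h}$ to absorb the exponential growth of $U_{2,+}$ against the decay built into the Carleman estimate, one gets a bound of the shape $C \frac{h}{\eps}$ times norms that are $O(1)$ or better in $h$. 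The upper right block is special: there the leading coefficient difference is $\sigma\cdot(A_1 - A_2)$, which vanishes on $\partial\Omega$, and after the gauge normalization the relevant term is further improved, so the pairing picks up an extra power of $h$ compared to the generic block. I would make this precise by Cauchy–Schwarz, splitting the boundary integral and the interior integral according to the decomposition of $\Omega$, and tracking how $\eps = \eps(h)$ is chosen (say $\eps \sim h^\alpha$ for a suitable $\alpha \in (0,1)$) to make $\frac{h}{\eps}$ small while keeping the decoupling error under control.

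The main obstacle, as the authors themselves flag, is the interplay between the $h$-dependent decomposition and the constants in the Carleman estimate: the second order operator has coefficients (essentially $1/q_{2,-}$ on the small-$q_{2,-}$ set, or the decoupled Schrödinger potential) that blow up as $h\to 0$, so the Carleman estimate must be uniform in a family of operators whose zeroth order terms are unbounded. Getting the estimate with the factor $\frac{h}{\eps}$ rather than just $\frac{1}{\eps}$ or worse, and choosing $\eps(h)$ so that this factor beats the blow-up of the coefficients on the small set (where one must also invoke a Carleman estimate for the Dirac operator directly, since decoupling fails), is the delicate part. The remaining steps — identifying the explicit leading terms of $U_{2,+}$ and of the source, using the boundary conditions on the coefficients, and applying Cauchy–Schwarz — are routine once the Carleman estimate with the right constants is in hand. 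I expect the actual argument to invoke the second order Carleman estimate proved in Section \ref{sec:carleman} as a black box and then spend most of its effort on the bookkeeping of which terms in the source contribute at which order in $h$ on each piece of the decomposition.
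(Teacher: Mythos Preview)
Your overall architecture --- boundary Carleman estimate, an $h$-dependent splitting of $\Omega$ into a set where $q_{2,-}$ is not too small and its complement, decoupling on the former and the first-order Dirac Carleman estimate on the latter --- matches the paper's proof. But there is a genuine gap: you misidentify why the upper right $2\times 2$ block is special. It has nothing to do with $A_1-A_2$ vanishing on $\partial\Omega$ or with any gauge normalization (none has been performed at this stage of the argument). The mechanism is purely algebraic and comes from the block structure of $P(\zeta)$. Since $P(\zeta)=\left(\begin{smallmatrix}0&\sigma\cdot\zeta\\\sigma\cdot\zeta&0\end{smallmatrix}\right)$, the leading term of $U_{2,+}^*$ is proportional to $\left(\begin{smallmatrix}0\\\sigma\cdot\zeta\end{smallmatrix}\right)_{4\times 2}$, whose upper $2\times 2$ block vanishes identically. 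Hence in the upper rows of the boundary pairing only the remainder $\hat R_2$, with $\norm{\hat R_2}_{H^1}\lesssim h$, survives; Cauchy--Schwarz and the trace theorem then give $\norm{J}^2\lesssim h^2\norm{e^{-\hphi/h}\partial_\nu W_+}_{L^2(\Gamma^c)}^2$, where $W$ denotes the right $4\times 2$ block of $U$. This factor $h^2$ is precisely what makes the subsequent Carleman bookkeeping produce $o(1)$; without it the argument only yields $O(1)$.

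A second issue is your choice of convexification parameter. The paper takes $\eps(h)=(C_0\abs{\log h^\alpha})^{-1}$, not $\eps\sim h^\alpha$. This logarithmic rate is essential: it gives $e^{-\hphi/h}\le h^{-C\alpha}e^{-\varphi/h}$, a merely polynomial loss that can be absorbed by taking $\alpha$ small, whereas a power-law choice would introduce super-polynomial losses when passing between the convexified and original weights. Correspondingly the splitting set is $S_h=\{\abs{1/q_{2,-}}\le\sqrt{\abs{\log h^\alpha}}\,\}$, tuned so that the first-order coefficients of the decoupled operator on $S_h$ grow like $\sqrt{\abs{\log h^\alpha}}$ and are swallowed by the gain $1/\eps(h)\sim\abs{\log h^\alpha}$ in the Carleman estimate. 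Finally, note that the weight in the boundary estimate must be $e^{-\varphi/h}$ (matching the phase $e^{-\rho/h}$ carried by $U_2^*$), not $e^{\varphi/h}$ as you wrote.
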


It is now possible to show that the magnetic field is determined by partial boundary measurements.

\begin{lemma}
$\nabla \times A_1 = \nabla \times A_2$ in $\Omega$.
\end{lemma}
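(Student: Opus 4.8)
The plan is to insert the complex geometrical optics solutions $U_1$ and $U_2$ from the remark after Proposition \ref{prop:cgo} into the matrix integral identity \eqref{matrix_integral_identity}, and to extract the magnetic field from the upper right $2 \times 2$ block in the limit $h \to 0$. First I would compute the leading behavior of the left-hand side $((V_1-V_2)U_1|U_2)$. Writing $U_1 = e^{\rho/h}(\frac{-1}{z}P(\zeta) r^{-1/2} e^{i\phi_1} a_1 + \tilde R_1)$ and $U_2^* = e^{-\rho/h}(\frac{1}{z} a_2 P(\zeta) r^{-1/2} e^{-i\phi_2} + \tilde R_2)$, the exponentials cancel, so after multiplying out one gets a leading term of the form $\frac{1}{\abs{z}^2} a_2 P(\zeta)(V_1-V_2)P(\zeta)\, r^{-1} e^{i(\phi_1-\phi_2)} a_1$ together with terms of size $O(h)$ coming from the remainders $\tilde R_j$. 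Using $V_1 - V_2 = P(A_1 - A_2) + (Q_1 - Q_2)$ and the algebraic identities $P(\zeta)^2 = 0$, $P(a)P(b)+P(b)P(a) = 2(a\cdot b)I_4$, and $P(a)Q = Q_I P(a)$, the contribution of $Q_1 - Q_2$ drops out of the sandwiched leading term (since $P(\zeta)Q_j P(\zeta) = Q_{j,I} P(\zeta)^2 = 0$), and the magnetic contribution reduces to $P(\zeta)P(A_1-A_2)P(\zeta) = 2\big(\zeta\cdot(A_1-A_2)\big) P(\zeta) - P(A_1-A_2)P(\zeta)^2 = 2\big(\zeta\cdot(A_1-A_2)\big)P(\zeta)$. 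So the upper right block of the left side behaves, as $h \to 0$, like a two-plane-type integral of $\zeta \cdot (A_1 - A_2)$ against the amplitudes $r^{-1} e^{i(\phi_1-\phi_2)} a_1 a_2$, exactly as in \cite{dksu} and \cite{salotzou}.

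Next I would invoke Lemma \ref{lemma:boundaryterm_magnetic}, which says the upper right block of the boundary term $(\frac{1}{q_{1,-}}\partial_\nu U_+|U_{2,+})_{\Gamma^c}$ is $o(1)$. Combining this with the identity \eqref{matrix_integral_identity} and letting $h \to 0$, the $O(h)$ error terms vanish and one is left with the statement that the limiting two-plane transform of $\zeta\cdot(A_1-A_2)$ (weighted by the explicit amplitudes built from $\phi_j$ and $a_j$) vanishes. Here one has the freedom to vary the point $x_0 \in \mR^3 \smallsetminus \closure{\text{ch}(\Omega)}$, the direction $\omega \in S^2$, and the holomorphic amplitudes $a_j$; a standard stationary-phase / oscillatory-argument together with the analytic microlocal machinery of \cite{dksu} then shows that the vanishing of these transforms for all admissible choices forces $d(A_1 - A_2) = 0$ as a 1-form, i.e. $\nabla \times A_1 = \nabla \times A_2$. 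One first concludes that the restriction of $\nabla\times(A_1-A_2)$ to all two-planes through $x_0$ vanishes, hence $\nabla\times(A_1-A_2) = 0$ in a suitable region, and by varying $x_0$ one covers all of $\Omega$. The boundary condition \eqref{boundarycond1}, $A_1 = A_2$ on $\partial\Omega$, together with $\Omega$ being simply connected, allows the subsequent gauge transformation $A_2 \mapsto A_2 + \nabla p$ with $p|_{\partial\Omega}=0$ reducing to $A_1 = A_2$, but that is used in the \emph{next} lemma rather than here.

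The main obstacle I anticipate is not the algebra in the leading-order computation — that is routine given the identities listed — but rather making the passage from "all these weighted two-plane transforms of $\zeta\cdot(A_1-A_2)$ vanish" to "$\nabla\times A_1 = \nabla\times A_2$" fully rigorous. This requires: (i) carefully tracking the phase $\phi_1 - \phi_2$, which depends on the extensions of $A_1, A_2$ and on $\zeta$ through the Cauchy transform, and showing one can still recover enough information despite this gauge-like ambiguity (the point being that $\zeta\cdot\nabla(\phi_1-\phi_2) = -\zeta\cdot(A_1-A_2)$, so the phase difference and the integrand are linked); (ii) exploiting the analyticity of the amplitudes $a_j$ in the variable $z$ to run the argument of \cite{dksu}; and (iii) a connectedness/covering argument in $x_0$ and $\omega$ to get the magnetic field on all of $\Omega$ rather than just near the front face. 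A secondary technical point is justifying that the $O(h)$ remainder terms — products such as $(V_1-V_2)\tilde R_1$ paired against the leading part of $U_2$, and $\tilde R_1$ against $\tilde R_2$ — are genuinely $o(1)$; this follows from $\norm{\tilde R_j}_{H^1(\Omega)}\lesssim h$, the smoothness and boundedness of the coefficients and the explicit amplitudes, and Cauchy–Schwarz, but must be stated. I expect the proof to cite \cite{dksu}, \cite{nakamuratsuchida}, \cite{salotzou} for the recovery step (i)–(iii) and to present mainly the reduction to the two-plane transform.
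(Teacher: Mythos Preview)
Your proposal is correct and follows essentially the same approach as the paper: compute the leading term of $((V_1-V_2)U_1|U_2)$ using the Pauli-matrix algebra, invoke Lemma \ref{lemma:boundaryterm_magnetic} to kill the upper right block of the boundary term, and then run the \cite{dksu}-type two-plane transform argument with varying $x_0$, $\omega$, and holomorphic amplitudes. One small slip: the leading factor is $z^{-2}$, not $\abs{z}^{-2}$, since both $U_1$ and $U_2^*$ carry a factor of $1/z$; the paper exploits this by choosing $a_1 = z^2 g(z) b(\theta)$ and $a_2 = 1$ to cancel the $z^{-2}$ and reduce to a clean integral over $\Omega_\theta$ against an arbitrary holomorphic $g$, after which the argument from \cite[Section 5]{dksu} (see also \cite{knudsensalo}) applies directly.
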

\begin{proof}
Since $P(\zeta) Q_j P(\zeta) = P(\zeta) P(\zeta) (Q_j)_I = 0$, the left hand side of \eqref{matrix_integral_identity}, with the above choices for $U_1$ and $U_2$, becomes 
\begin{equation*}
\int_{\Omega} U_2^* (V_1-V_2) U_1 \,dx = -\int_{\Omega} P(\zeta) P(A_1-A_2) P(\zeta)  \frac{e^{i(\phi_1-\phi_2)} a_1 a_2}{z^2 r} \,dx + O(h).
\end{equation*}
The identity $P(\zeta) P(A) = -P(A) P(\zeta) + 2(\zeta \cdot A) I_4$ implies 
\begin{equation*}
\int_{\Omega} U_2^* (V_1-V_2) U_1 \,dx = -2 \int_{\Omega} P(\zeta) (\zeta \cdot (A_1-A_2)) \frac{e^{i(\phi_1-\phi_2)} a_1 a_2}{z^2 r} \,dx + O(h).
\end{equation*}

Taking the limit as $h \to 0$ in the upper right $2 \times 2$ block of \eqref{matrix_integral_identity}, gives by Lemma \ref{lemma:boundaryterm_magnetic} that 
\begin{equation*}
\int_{\Omega} e^{i(\phi_1-\phi_2)} (\sigma \cdot \zeta)(\zeta \cdot (A_1-A_2)) a_1 a_2 z^{-2} r^{-1} \,dx = 0.
\end{equation*}
We choose $a_1(z,\theta) = z^2 g(z) b(\theta)$ and $a_2(z,\theta) = 1$, where $g(z)$ is a holomorphic and smooth function in the closure of $\Omega_{\theta} = \{ z \in \mC \,;\, (x_1,r e^{i\theta}) \in \Omega\}$, and $b(\theta)$ is any smooth function. Note that $\zeta = \zeta(\theta)$. Moving to polar coordinates in the $x'$ variables and by varying $b(\theta)$, we obtain that for all $\theta$ 
\begin{equation*}
(\sigma \cdot \zeta) \int_{\Omega_{\theta}} e^{i(\phi_1-\phi_2)} (A_1-A_2) \cdot (e_1+i e_r) g(z) \,d\bar{z} \wedge dz = 0.
\end{equation*}
Since $\sigma \cdot \zeta$ is not zero for any $\theta$, it follows that 
\begin{equation*}
\int_{\Omega_{\theta}} e^{i(\phi_1-\phi_2)} (A_1-A_2) \cdot (e_1+i e_r) g(z) \,d\bar{z} \wedge dz = 0.
\end{equation*}

The last expression is related to a (nonlinear) two-plane transform of $\nabla \times (A_1-A_2)$ over a set of two-planes. We may now apply the arguments in \cite[Section 5]{dksu} (see also \cite[Section 7]{knudsensalo}, where the last identity is the same as formula (40)). One first shows by complex analytic methods that the identity remains true with $e^{i(\phi_1-\phi_2)}$ and $g$ replaced by $1$. It follows that 
\begin{equation*}
\int_{\Omega_{\theta}} \xi \cdot (A_1-A_2) \,d\bar{z} \wedge dz = 0
\end{equation*}
whenever $\xi$ is in the two-plane spanned by $e_1$ and $e_r$. Varying $x_0$ and $\omega$ in the construction of solutions slightly, this implies that 
\begin{equation*}
\int_{P \cap \Omega} \xi \cdot (A_1-A_2) \,dS = 0
\end{equation*}
for all two-planes $P$ such that the distance between the tangent space $T(P)$ and the point $(0,e_1)$ is small. Finally, an argument involving the microlocal Helgason support theorem and the microlocal Holmgren theorem shows that $\nabla \times A_1 = \nabla \times A_2$ in $\Omega$.
\end{proof}

Since $\Omega$ is simply connected and $\nabla \times A_1 = \nabla \times A_2$, we see that $A_1 - A_2 = \nabla p$ for some function $p \in C^{\infty}(\closure{\Omega})$. Also, by the assumption that $A_1 = A_2$ on $\partial \Omega$, we see that $p$ is constant on the connected set $\partial \Omega$. Thus, we can assume that $p|_{\partial \Omega} = 0$ by substracting a constant. Then $C_{V_2}^{\Gamma}$ is preserved under the gauge transformation $A_2 \mapsto A_2 + \nabla p$, and consequently we may assume that $A_1 \equiv A_2$. We shall write $A = A_1 = A_2$ and $\phi = \phi_1 = \phi_2$.

By Proposition \ref{prop:cgo} there exist solutions $U_1$ and $U_2$ to the equations $\mathcal{L}_{V_j} U_j = 0$ in $\Omega$ $(j=1,2)$, such that 
\begin{align*}
U_1 &= e^{\rho/h} e^{i\phi} \Big[ -\frac{1}{z} P(\zeta) r^{-1/2} a_1 \\
 & \quad + \frac{h}{i} (P(D+\nabla \phi+A)-Q_{1,I})(r^{-1/2} a_1) - \frac{h}{z} P(\zeta) \hat{C}_1 + h^2 \hat{R}_1 \Big], \\
U_2^* &= e^{-\rho/h} e^{-i\phi} \Big[ \frac{1}{z} P(\zeta) r^{-1/2} a_2 \\
 & \quad + \frac{h}{i} (P(D-\nabla \phi-A)+Q_{2,I})(r^{-1/2} a_2) + \frac{h}{z} \hat{C}_2^* P(\zeta) + h^2 \hat{R}_2 \Big],
\end{align*}
where $\zeta \cdot (\nabla \phi + A) = 0$, $\zeta \cdot \nabla a_j = 0$, $\norm{\hat{C}_j}_{W^{1,\infty}} \lesssim 1$, and $\norm{\hat{R}_j}_{H^1(\Omega)} \lesssim 1$. 

With these choices for $U_1$ and $U_2$, we have the following result for the boundary term in \eqref{matrix_integral_identity} which will be used in recovering the electric potentials. Again, the proof is deferred to the next section.

\begin{lemma} \label{lemma:boundaryterm_electric}
The upper left and right $2 \times 2$ blocks of $(\frac{1}{q_{1,-}} \partial_{\nu} U_+|U_{2,+})_{\Gamma^c}$ are $o(h)$ as $h \to 0$.
\end{lemma}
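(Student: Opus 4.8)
The plan is to reduce $\partial_{\nu} U_+$ on $\Gamma^c$ to a second order operator applied to $U_+$ via a Carleman estimate, and then estimate the resulting expression using the explicit form of the complex geometrical optics solutions. Recall $U = U_1 - \tilde U_2$ satisfies $-\Delta U = P(D)(-V_1 U_1 + V_2 \tilde U_2)$ with $U_+|_{\partial\Omega} = 0$ and $\partial_\nu U_+|_\Gamma = 0$, so $\partial_\nu U_+$ is supported in $\Gamma^c$. Since $\tilde U_{2,+}|_{\partial\Omega} = U_{1,+}|_{\partial\Omega}$, the function $U_+$ itself decays (it is a difference of CGO solutions agreeing on the boundary), and by a boundary Carleman estimate for the conjugated Laplacian $e^{\varphi/h}(-h^2\Delta)e^{-\varphi/h}$ with the convexified weight $\varphi_\eps = \varphi + \tfrac{h}{2\eps}\varphi^2$ (as in \cite{dksu}, \cite{ksu}), one bounds $\|e^{\varphi/h}\partial_\nu U_+\|_{L^2(\Gamma^c)}$ in terms of $\|e^{\varphi/h} h^2\Delta U_+\|_{L^2(\Omega)}$ plus interior terms, with the key subtlety that $\eps$ must be chosen as a function of $h$ to keep the constants controlled against the blow-up coming from the $1/q_{1,-}$ weight and the decomposition of $\Omega$.

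The steps, in order: first, pair $\partial_\nu U_+$ against $U_{2,+}$; since $U_{2,+} = e^{-\varphi/h}(\text{bounded})$ on the upper $2$-vector components, inserting $e^{\pm\varphi/h}$ converts the boundary inner product into $(e^{\varphi/h}\partial_\nu U_+ \,|\, e^{-\varphi/h} U_{2,+})_{\Gamma^c}$, which by Cauchy--Schwarz is controlled by $\|e^{\varphi/h}\partial_\nu U_+\|_{L^2(\Gamma^c)}$ times an $O(1)$ factor. Second, apply the second order Carleman estimate of Section \ref{sec:carleman} to pass from the boundary $L^2$ norm of $e^{\varphi/h}\partial_\nu U_+$ to the interior norm of $e^{\varphi/h}(h^2\Delta U_+)$, which equals $e^{\varphi/h} h^2 P(D)(-V_1 U_1 + V_2 \tilde U_2)_+$ up to lower order; here one uses the decomposition of $\Omega$ into the region where $q_{2,-}$ is bounded below (decoupled regime) and the region where it is small, choosing the operator and the parameter $\eps = \eps(h)$ accordingly, and in the small region also invoking the first order Dirac Carleman estimate to absorb the bad terms. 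Third, plug in the explicit CGO expansions: the conjugation removes the exponential weights, leaving $h^2$ times a bounded $H^1$ quantity, and one reads off the power of $h$ in the upper right block — the extra factor $h$ present in the magnetic case gives $o(1)$ for Lemma \ref{lemma:boundaryterm_magnetic} and $o(h)$ for the upper left and right blocks in Lemma \ref{lemma:boundaryterm_electric}, where after the gauge reduction the leading $P(\zeta)P(\zeta)$ terms vanish and one gains an additional order in $h$.

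The main obstacle is the interplay between the convexification parameter $\eps$, the small parameter $h$, and the unbounded weight $1/q_{1,-}$ on $\Gamma^c$: the Carleman estimate for the convexified weight carries constants like $1/\eps$, while decoupling the Dirac system in the region $\{|q_{2,-}| \gtrsim \delta\}$ forces coefficients of size $1/\delta$ in the second order operator, and one must simultaneously choose $\delta = \delta(h)$ and $\eps = \eps(h)$ so that the product of all these constants against the $h^{2}$ (or $h^{3}$) gain from the construction still tends to zero. Getting this bookkeeping right — and handling the non-decoupled region $\{|q_{2,-}| \lesssim \delta\}$ by the first order Carleman estimate without losing the gain — is the crux, and is exactly what Section \ref{sec:carleman} is set up to deliver; here I would simply quote those estimates and track the powers of $h$, $\eps$, $\delta$ through the pairing.
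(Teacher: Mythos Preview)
Your broad strategy matches the paper's: pair on $\Gamma^c$ by Cauchy--Schwarz, apply the second order Carleman estimate of Lemma~\ref{lemma:carleman_hdependent} with the $h$-dependent convexification $\eps(h) = (C_0\abs{\log h^\alpha})^{-1}$, split $\Omega$ into $S_h = \{\abs{1/q_{2,-}} \leq \sqrt{\abs{\log h^\alpha}}\}$ and its complement, and on $\Omega\smallsetminus S_h$ invoke the Dirac Carleman estimate of Lemma~\ref{lemma:carleman_dirac}. But two of the numerical mechanisms that actually produce the powers of $h$ are misidentified, and as written the bookkeeping would not close.

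First, the Cauchy--Schwarz step does not give an ``$O(1)$ factor''. The upper $2\times 2$ block of $U_{2,+}^*$ is $e^{-\rho/h}\hat R_2$ with $\norm{\hat R_2}_{H^1(\Omega)}\lesssim h$, so by the trace theorem $\norm{\hat R_2}_{L^2(\Gamma^c)}\lesssim h$; this is what yields $\norm{J}^2\lesssim h^2\norm{e^{-\hat\varphi/h}\partial_\nu W_+}_{L^2(\Gamma^c)}^2$ and then, via the Carleman estimate, $\norm{J}^2\lesssim h^3\norm{e^{-\hat\varphi/h}(-\Delta+\hat A_2+\hat q_2)W_+}^2$. (Note also that the weight on $\partial_\nu U_+$ must be $e^{-\varphi/h}$, not $e^{+\varphi/h}$, since $U_1\sim e^{+\varphi/h}$.) Without the $h^2$ from $\hat R_2$ the estimate already fails in the magnetic case.

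Second, and this is the real content of Lemma~\ref{lemma:boundaryterm_electric}, the extra factor of $h$ over Lemma~\ref{lemma:boundaryterm_magnetic} does \emph{not} come from $P(\zeta)^2=0$; that identity holds regardless of any gauge and is used on the interior integral, not the boundary term. The gain comes from $A_1=A_2=A$: when one rewrites $(-\Delta+\hat A_2(x,D)+\hat q_2)W_{1,+}$ in $S_h$ using the Dirac relations $\sigma\cdot(D+A)W_{1,+}=-q_{1,-}W_{1,-}$ and $\sigma\cdot(D+A)W_{1,-}=-q_{1,+}W_{1,+}$, every first-order derivative cancels (the $2A\cdot D$ in $\hat A_2$ matches the $2A\cdot D$ coming from $-\Delta W_{1,+}$, and the $\sigma\cdot D$ pieces collapse via the Dirac system), leaving a purely zeroth-order expression $\hat M_{1,+}W_{1,+}+\hat M_{1,-}W_{1,-}$ with $\norm{\hat M_{1,\pm}}_{L^\infty(S_h)}\lesssim\sqrt{\abs{\log h^\alpha}}$. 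This gives $h^3\norm{\cdots}_{L^2(S_h)}^2\lesssim h^{3-C\alpha}\abs{\log h^\alpha}=o(h^2)$, whereas in the magnetic case the surviving first-order term $2(A_2-A_1)\cdot D\,W_{1,+}$ costs a factor $h^{-2}$ and one only gets $h^{1-C\alpha}\abs{\log h^\alpha}=o(1)$. The analysis on $\Omega\smallsetminus S_h$ is then parallel to the proof of Lemma~\ref{lemma:boundaryterm_magnetic}, with the same cancellation producing $o(h^2)$ instead of $o(1)$.
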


From the upper left and right $2 \times 2$ blocks of \eqref{matrix_integral_identity}, it turns out that one can recover $q_-$ everywhere and $q_+$ at those points where $q_- \neq 0$.

\begin{lemma}
One has $q_{1,-} = q_{2,-}$ in $\Omega$. Also, $q_{1,+} = q_{2,+}$ at each point of $\Omega$ where $q_{1,-}$ is nonzero.
\end{lemma}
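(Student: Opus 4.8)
The plan is to substitute the complex geometrical optics solutions $U_1,U_2$ (now $A_1=A_2=A$ and $\phi_1=\phi_2=\phi$) into \eqref{matrix_integral_identity} and read off the upper left and upper right $2 \times 2$ blocks. Since $V_1-V_2=Q_1-Q_2$ is block diagonal and $P(\zeta)(Q_1-Q_2)P(\zeta)=(Q_1-Q_2)_IP(\zeta)^2=0$, the leading ($h^0$) term of $U_2^*(V_1-V_2)U_1$ vanishes identically. I therefore divide \eqref{matrix_integral_identity} by $h$ and let $h\to 0$: by Lemma \ref{lemma:boundaryterm_electric} the upper left and right blocks of the right hand side are $o(h)$, so they drop out in the limit, leaving an identity for the coefficient of $h$ in $U_2^*(Q_1-Q_2)U_1$.

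That coefficient is the sum of the two cross products of the first two terms in the expansions of $U_1$ and $U_2^*$ (the $P(\zeta)\hat C_j$ terms again drop out against $P(\zeta)(Q_1-Q_2)P(\zeta)=0$). Using $P(\zeta)(Q_1-Q_2)=(Q_1-Q_2)_IP(\zeta)$, $P(\zeta)Q_{1,I}=Q_1P(\zeta)$, $P(\zeta)Q_{2,I}=Q_2P(\zeta)$ and the block structure of $P(\zeta)$, a direct computation reveals a clean separation: the first order parts $P(D+\nabla\phi+A)$, $P(D-\nabla\phi-A)$ of the amplitudes contribute only to the diagonal blocks, while the potential parts $Q_{1,I}$, $Q_{2,I}$ contribute only to the off-diagonal blocks, where they add up (telescoping) to a nonzero multiple of $z^{-1}r^{-1}a_1a_2\,(\sigma\cdot\zeta)(q_{1,+}q_{1,-}-q_{2,+}q_{2,-})$. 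In the upper left block, $P(\zeta)$ moves the lower right entry $(q_{1,-}-q_{2,-})I_2$ of $Q_1-Q_2$ onto the diagonal, so this block carries the overall factor $q_{1,-}-q_{2,-}$; and the transport identity $\zeta\cdot D(r^{-1/2}a)=-\frac{1}{2}r^{-3/2}a$ (valid whenever $\zeta\cdot\nabla a=0$), together with $(\sigma\cdot p)(\sigma\cdot q)=(p\cdot q)I_2+i\sigma\cdot(p\times q)$, makes the scalar parts cancel, leaving an expression which is first order in the amplitudes.

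Choosing the amplitudes as holomorphic functions of $z$ times functions of $\theta$, passing to the coordinates $(x_1,r,\theta)$, varying the $\theta$-profile (and integrating by parts in $\theta$ in the upper left block, which collapses the two angular terms into a single $\sigma\cdot\zeta$-term), and using that $\sigma\cdot\zeta$ is invertible, I obtain, for every $\theta$ and all $g$ holomorphic near $\closure{\Omega_\theta}$,
\begin{align*}
&\int_{\Omega_\theta}(q_{1,-}-q_{2,-})\,\frac{g(z)}{r}\,d\bar z\wedge dz=0, \\
&\int_{\Omega_\theta}(q_{1,+}q_{1,-}-q_{2,+}q_{2,-})\,g(z)\,d\bar z\wedge dz=0.
\end{align*}
By \eqref{boundarycond2} both $q_{1,-}-q_{2,-}$ and $q_{1,+}q_{1,-}-q_{2,+}q_{2,-}$ extend by zero to $C^1(\mR^3)$ functions; choosing $g$ to absorb the amplitude factor then shows that the (weighted, in the first case) two-plane transform of each, over the plane through $x_0$ spanned by $e_1$ and $e_r$, vanishes, and here there is no factor $e^{i(\phi_1-\phi_2)}$ left to remove, unlike in the recovery of the magnetic field. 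Varying $x_0$ and $\omega$ and invoking the microlocal Helgason support theorem and the microlocal Holmgren theorem exactly as above (a smooth positive weight does not affect this argument) gives $q_{1,-}=q_{2,-}$ and $q_{1,+}q_{1,-}=q_{2,+}q_{2,-}$ in $\Omega$. Subtracting, $q_{1,-}(q_{1,+}-q_{2,+})=q_{1,+}q_{1,-}-q_{2,+}q_{1,-}=q_{1,+}q_{1,-}-q_{2,+}q_{2,-}=0$ pointwise in $\Omega$, so $q_{1,+}=q_{2,+}$ at every point where $q_{1,-}\neq 0$.

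The main obstacle is the bookkeeping in the $O(h)$ computation: one must keep careful track of which factors act as differential operators and which as matrix multipliers, of the side on which $\sigma\cdot\zeta$ appears, and of the precise form of the transport identity, in order to verify that the upper left block isolates exactly $q_{1,-}-q_{2,-}$ and that the off-diagonal blocks telescope to exactly $q_{1,+}q_{1,-}-q_{2,+}q_{2,-}$. A secondary point is the weight $1/r$ in the first block identity: it depends on $x_0$, so a little care is needed when varying $x_0$ in the inversion step, although the weight is smooth and strictly positive on $\closure{\Omega}$ since the line through $x_0$ in the direction $\omega$ does not meet $\closure{\Omega}$.
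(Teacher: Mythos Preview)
Your proposal is correct and follows essentially the same route as the paper: expand $U_2^*(Q_1-Q_2)U_1$, observe the $O(1)$ term vanishes, use Lemma \ref{lemma:boundaryterm_electric} to discard the boundary term at order $h$, separate the diagonal blocks (carrying $q_{1,-}-q_{2,-}$) from the off-diagonal blocks (carrying $q_{1,+}q_{1,-}-q_{2,+}q_{2,-}$), integrate by parts in $\theta$ to collapse to a single $\sigma\cdot\zeta$ factor, and feed the resulting planar identities into the microlocal Helgason--Holmgren argument from \cite{dksu}. One small slip: $\sigma\cdot\zeta$ is \emph{not} invertible (indeed $(\sigma\cdot\zeta)^2=0$), but since it multiplies a scalar you only need that it is nonzero, which is what the paper uses; also, the paper phrases the upper-left conclusion as the vanishing of the transform of $\partial_\theta(q_{1,-}-q_{2,-})\,r^{-1}$ and then argues $\theta$-independence plus compact support, which is equivalent to your formulation.
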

\begin{proof}
We introduce the notations $\hat{Q} = Q_1-Q_2$ and $\tilde{a}_j = r^{-1/2} a_j$ to make the formular shorter. Now $V_1 - V_2 = \hat{Q}$, so \eqref{matrix_integral_identity} becomes 
\begin{multline} \label{identity_electric_potential}
\int_{\Omega} \Big[ \frac{1}{z} P(\zeta) \tilde{a}_2 + \frac{h}{i} (P(D-\nabla \phi-A)+Q_{2,I})\tilde{a}_2 + \frac{h}{z} \hat{C}_2^* P(\zeta) \\
 + h^2 \hat{R}_2 \Big] \hat{Q} \Big[ -\frac{1}{z} P(\zeta) \tilde{a}_1 + \frac{h}{i} (P(D+\nabla \phi+A)-Q_{1,I})\tilde{a}_1 \\
 - \frac{h}{z} P(\zeta) \hat{C}_1 + h^2 \hat{R}_1 \Big] \,dx = -(\frac{1}{q_{1,-}} \partial_{\nu} U_+|U_{2,+})_{\Gamma^c}.
\end{multline}
Since $P(\zeta)\hat{Q}P(\zeta) = P(\zeta)P(\zeta)\hat{Q}_I = 0$, the term on the left of \eqref{identity_electric_potential} which is $O(1)$ with respect to $h$ vanishes. Also, for similar reasons, all terms involving $\frac{h}{z} P(\zeta) \hat{C}_1$ and $\frac{h}{z} \hat{C}_2^* P(\zeta)$ and $\hat{R}_j$ behave like $O(h^2)$. Thus we obtain 
\begin{multline} \label{identity_electric_potential_2}
\frac{h}{i} \int_{\Omega} \frac{1}{z} \Big[ P(\zeta) \hat{Q} \big\{(P(D+\nabla \phi+A)-Q_{1,I})\tilde{a}_1\big\} \tilde{a}_2 \\
  - \big\{(P(D-\nabla \phi-A)+Q_{2,I})\tilde{a}_2\big\} \hat{Q} P(\zeta) \tilde{a}_1 \Big] \,dx + O(h^2) \\
 = -(\frac{1}{q_{1,-}} \partial_{\nu} U_+|U_{2,+})_{\Gamma^c}.
\end{multline}
Also the terms involving $\nabla \phi + A$ vanish because 
\begin{align*}
 &P(\zeta) \hat{Q} P(\nabla \phi + A) + P(\nabla \phi + A) \hat{Q} P(\zeta) \\
 &= [P(\zeta) P(\nabla \phi + A) + P(\nabla \phi + A) P(\zeta)] \hat{Q}_I \\
 &= 2 [\zeta \cdot (\nabla \phi + A)] \hat{Q}_I = 0.
\end{align*}
The expression \eqref{identity_electric_potential_2} becomes 
\begin{multline} \label{identity_electric_potential_3}
\frac{h}{i} \int_{\Omega} \frac{1}{z} \Big[ P(\zeta) \hat{Q} \big\{(P(D)-Q_{1,I})\tilde{a}_1\big\} \tilde{a}_2 - \big\{(P(D)+Q_{2,I})\tilde{a}_2\big\} \hat{Q} P(\zeta) \tilde{a}_1 \Big] \,dx \\
 + O(h^2) = -(\frac{1}{q_{1,-}} \partial_{\nu} U_+|U_{2,+})_{\Gamma^c}.
\end{multline}
Note that $-\hat{Q} Q_{1,I} - Q_2 \hat{Q}_I = \hat{q} I_4$ where $\hat{q} = q_{2,+} q_{2,-} - q_{1,+} q_{1,-}$, so \eqref{identity_electric_potential_3} can be written as 
\begin{multline} \label{identity_electric_potential_4}
\frac{h}{i} \int_{\Omega} \frac{1}{z} \Big[ P(\zeta) P(D\tilde{a}_1) \tilde{a}_2 - P(D\tilde{a}_2) P(\zeta) \tilde{a}_1 \Big] \hat{Q}_I \,dx \\
 + \frac{h}{i} \int_{\Omega} \frac{1}{z} P(\zeta) \hat{q} \tilde{a}_1 \tilde{a}_2 \,dx + O(h^2) = -(\frac{1}{q_{1,-}} \partial_{\nu} U_+|U_{2,+})_{\Gamma^c}.
\end{multline}
Now, in the second integral on the left of \eqref{identity_electric_potential_4}, the upper left $2 \times 2$ block is zero. Thus, multiplying \eqref{identity_electric_potential_4} by $h^{-1}$ and taking the limit as $h \to 0$ in the upper left $2 \times 2$ block, we obtain from Lemma \ref{lemma:boundaryterm_electric} that 
\begin{equation} \label{identity_electric_potential_5}
\int_{\Omega} \frac{1}{z} \Big[ (\sigma \cdot \zeta)(\sigma \cdot D\tilde{a}_1) \tilde{a}_2 - (\sigma \cdot D\tilde{a}_2)(\sigma \cdot \zeta) \tilde{a}_1 \Big] (q_{1,-} - q_{2,-}) \,dx = 0.
\end{equation}

At this point we make the choices 
\begin{equation*}
\tilde{a}_1 = r^{-1/2} z b_1(\theta), \quad \tilde{a}_2 = r^{-1/2},
\end{equation*}
where $b_1(\theta)$ is a smooth function. Since 
\begin{equation*}
\nabla \tilde{a}_1 = -\frac{1}{2} r^{-3/2} z b_1 e_r + r^{-1/2} \zeta b_1 + r^{-3/2} z \frac{\partial b_1}{\partial \theta} e_{\theta}
\end{equation*}
where $e_{\theta} = (0,-\sin \theta, \cos \theta)$, we have 
\begin{multline*}
\frac{1}{z} \Big[ (\sigma \cdot \zeta) (\sigma \cdot D\tilde{a}_1) \tilde{a}_2 - (\sigma \cdot D\tilde{a}_2) (\sigma \cdot \zeta) \tilde{a}_1 \Big] \\
 = \frac{1}{i} (\sigma \cdot \zeta) (\sigma \cdot e_{\theta}) r^{-2} \frac{\partial b_1}{\partial \theta} - \frac{1}{2i} \big[ (\sigma \cdot \zeta) (\sigma \cdot e_r) - (\sigma \cdot e_r) (\sigma \cdot \zeta) \big] r^{-2} b_1.
\end{multline*}
Using the identity 
\begin{equation*}
(\sigma \cdot a)(\sigma \cdot b) = (a \cdot b) I_2 + i \sigma \cdot (a \times b), \qquad a, b \in \mC^3,
\end{equation*}
we obtain from \eqref{identity_electric_potential_5} that 
\begin{equation} \label{identity_electric_potential_6}
\int_{\Omega} \Big[ i (\sigma \cdot \zeta) \frac{\partial b_1}{\partial \theta} - (\sigma \cdot e_{\theta}) b_1 \Big] (q_{1,-}-q_{2,-}) r^{-2} \,dx = 0.
\end{equation}

Using the condition \eqref{boundarycond2}, we may extend $q_{1,\pm}-q_{2,\pm}$ by zero outside $\Omega$ and therefore we can assume that $q_{1,\pm}-q_{2,\pm} \in C^1_c(\mR^3)$. We write \eqref{identity_electric_potential_6} as 
\begin{equation*}
\int_{0}^{\pi} \Big[ i(\sigma \cdot \zeta) \frac{\partial b_1}{\partial \theta} - (\sigma \cdot e_{\theta}) b_1 \Big] \Big( \int_{\mR} \int_0^{\infty} (q_{1,-}-q_{2,-})(x_1,r,\theta) r^{-1} \,dx_1 \,dr \Big) \,d\theta = 0.
\end{equation*}
Note that $\zeta$ and $e_{\theta}$ only depend on $\theta$. Integrating by parts in $\theta$ and using that $\frac{\partial \zeta}{\partial \theta} = i e_{\theta}$, we obtain 
\begin{equation*}
\int_{0}^{\pi} (\sigma \cdot \zeta) b_1 \Big( \int_{\mR} \int_0^{\infty} \frac{\partial (q_{1,-}-q_{2,-})}{\partial \theta}(x_1,r,\theta) r^{-1} \,dx_1 \,dr \Big) \,d\theta = 0.
\end{equation*}
Varying $b_1$, it follows that 
\begin{equation*}
(\sigma \cdot \zeta) \int_{\Omega_{\theta}} \frac{\partial(q_{1,-}-q_{2,-})}{\partial \theta}(x_1,r,\theta) r^{-1} \,d\bar{z} \wedge dz = 0,
\end{equation*}
for all $\theta$.

Since $\sigma \cdot \zeta$ is never zero, we finally get 
\begin{equation*}
\int_{\Omega_{\theta}} \frac{\partial (q_{1,-} - q_{2,-})}{\partial \theta} r^{-1} \,d\bar{z} \wedge dz = 0
\end{equation*}
for all $\theta$. This implies the vanishing of a Radon transform on certain planes. Now varying the point $x_0$ in the definition of $\varphi$, the direction $\omega \in S^2$ in the definition of of $\psi$, and varying $\theta$, we obtain from the microlocal Helgason and Holmgren theorems as in \cite{dksu} that 
\begin{equation*}
\frac{\partial (q_{1,-} - q_{2,-})}{\partial \theta} r^{-1} = 0 \quad \text{in } \Omega.
\end{equation*}
Thus $q_{1,-} - q_{2,-}$ is independent of $\theta$. Since $q_{1,-} - q_{2,-} \in C_c(\mR^3)$, we obtain $q_{1,-} = q_{2,-}$ in $\Omega$ as required.

Finally, we return to \eqref{identity_electric_potential_4} and now consider the upper right $2 \times 2$ block. In the first integral on the left this block is zero, so multiplying by $h^{-1}$ and letting $h \to 0$ in the upper right block gives by Lemma \ref{lemma:boundaryterm_electric} that 
\begin{equation*}
\int_{\Omega} \frac{1}{z} (\sigma \cdot \zeta) \hat{q} \tilde{a}_1 \tilde{a}_2 \,dx = 0.
\end{equation*}
By a similar argument as above, we obtain that $\hat{q} = 0$. Since $q_{1,-} = q_{2,-}$, this implies $q_{1,+} = q_{2,+}$ at each point where $q_{1,-}$ is nonzero.
\end{proof}

We have proved that $A_1 = A_2$ and $q_{1,-} = q_{2,-}$ in $\Omega$, and that $q_{1,+} = q_{2,+}$ at any point where $q_{1,-}$ is nonzero. The next logical step would be to consider the lower right $2 \times 2$ block of \eqref{matrix_integral_identity} to show that $q_{1,+} = q_{2,+}$ everywhere in $\Omega$. However, the estimates for the boundary term in this case appear to be quite difficult. We will choose another route and reduce the remaining step to the full data problem, by using unique continuation.

\begin{lemma}
Assume the conditions of Theorem \ref{thm:uniqueness}, and assume in addition that there is some neighborhood $W$ of $\Gamma^c$ in $\overline{\Omega}$ such that 
\begin{align*}
A_1 = A_2 = A &\quad \text{in } W, \\
q_{1,\pm} = q_{2,\pm} = q_{\pm} &\quad \text{in } W.
\end{align*}
Then $C_{V_1}^{\partial \Omega} = C_{V_2}^{\partial \Omega}$, that is, the boundary measurements with full boundary data coincide.
\end{lemma}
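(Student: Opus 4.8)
The plan is to deduce the full‑data equality from the partial‑data equality by a unique continuation argument near $\Gamma^c$, exploiting that $V_1$ and $V_2$ agree there. Fix $u_1 \in H^1(\Omega)^4$ with $\mathcal{L}_{V_1} u_1 = 0$ in $\Omega$. Since $C_{V_1}^{\Gamma} = C_{V_2}^{\Gamma}$, there is a solution $u_2 \in H^1(\Omega)^4$ of $\mathcal{L}_{V_2} u_2 = 0$ with $u_{2,+}|_{\partial\Omega} = u_{1,+}|_{\partial\Omega}$ and $u_{2,-}|_{\Gamma} = u_{1,-}|_{\Gamma}$. Put $w = u_1 - u_2$. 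As $V_1 = V_2 = V$ in $W$, we have $\mathcal{L}_V w = 0$ in $W$, and by construction $w_+|_{\partial\Omega} = 0$ and $w_-|_{\Gamma} = 0$, so $w$ has vanishing trace on the relatively open set $\Gamma \cap W \subseteq \partial\Omega$. It suffices to show $w_-|_{\Gamma^c} = 0$: then $w_-$ vanishes on all of $\partial\Omega$, so $u_1$ and $u_2$ share the same full Cauchy data and $C_{V_1}^{\partial\Omega} \subseteq C_{V_2}^{\partial\Omega}$; since all hypotheses are symmetric in the indices (in particular $q_{2,-} = q_{1,-} \neq 0$ on $\Gamma^c$ by \eqref{boundarycond3} and the hypothesis on $W$), the same argument with $1$ and $2$ interchanged gives the reverse inclusion and hence $C_{V_1}^{\partial\Omega} = C_{V_2}^{\partial\Omega}$.

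Shrinking $W$ while keeping it a neighborhood of $\Gamma^c$ in $\overline{\Omega}$, we may assume $q_- = q_{1,-}$ is nonvanishing in $W$; this is possible because $q_-$ is continuous and, by \eqref{boundarycond3}, nonzero on the compact set $\Gamma^c$. Then the Dirac system decouples in $W$: writing $\mathcal{L}_V w = 0$ as $\sigma \cdot (D+A) w_- + q_+ w_+ = 0$ and $\sigma \cdot (D+A) w_+ + q_- w_- = 0$, the second equation gives $w_- = -q_-^{-1} \sigma \cdot (D+A) w_+$, and substituting into the first produces a second order equation $L w_+ = 0$ in $W$ with smooth coefficients whose principal part is $q_-^{-1} \Delta I_2$, hence elliptic. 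Since $w_+ \in H^1$ with $w_+|_{\Gamma \cap W} = 0$, elliptic regularity gives $w_+ \in H^2$ up to the boundary portion $\Gamma \cap W$. Restricting $\sigma \cdot (D+A) w_+ + q_- w_- = 0$ to $\Gamma \cap W$, where $w_+$ vanishes (so $D w_+ = -i(\partial_\nu w_+)\nu$ there) and $w_- = 0$, and using $(\sigma \cdot \nu)^2 = I_2$, we conclude $\partial_\nu w_+ = 0$ on $\Gamma \cap W$.

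Thus $w_+$ solves the second order elliptic equation $L w_+ = 0$ in $W$ and has vanishing Cauchy data on the relatively open boundary piece $\Gamma \cap W$. Extending $w_+$ by zero across $\Gamma \cap W$ yields an $H^2_{\mathrm{loc}}$ function solving an elliptic equation on a slightly larger open set and vanishing there on a nonempty open subset, so the weak unique continuation principle for second order elliptic equations forces $w_+$ to vanish; propagating this through $W$ (the components of $W$ meeting $\Gamma^c$ also meet $\Gamma$ in the present geometry, and one may chain unique continuation along $W$ if necessary) gives $w_+ = 0$ in a neighborhood of $\Gamma^c$. Then $w_- = -q_-^{-1}\sigma \cdot (D+A) w_+ = 0$ near $\Gamma^c$ as well, so $w_-|_{\Gamma^c} = 0$, as needed.

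The heart of the argument, and the step requiring the most care, is this unique continuation: one must justify the regularity of $w_+$ up to $\Gamma \cap W$, the legitimacy of the zero‑extension across that smooth boundary piece, and above all the propagation of the vanishing through all of $W$ down to $\Gamma^c$ — which rests on $W$ connecting $\Gamma$ to $\Gamma^c$. The decoupling afforded by hypothesis \eqref{boundarycond3} is precisely what reduces the delicate first order boundary continuation for the Dirac system to the standard second order scalar‑principal‑part situation.
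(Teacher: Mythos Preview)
Your argument is correct and follows essentially the same route as the paper: form the difference $w=u_1-u_2$, decouple in $W$ where $q_-\neq 0$ to obtain a second order elliptic system with scalar principal part for $w_+$, show $w_+$ has zero Cauchy data on $W\cap\Gamma$, invoke unique continuation to get $w_+=0$ in $W$, and read off $w_-=0$ on $\Gamma^c$. The only cosmetic differences are that the paper obtains $\partial_\nu w_+|_{\Gamma}=0$ and the global $H^2$ regularity of $w_+$ directly from Lemma~\ref{lemma:integral_identity} (specifically \eqref{ddmap_normalderivative}, using $w_+\in H^1_0(\Omega)$), and it disposes of the connectedness issue by first reducing to $\Gamma^c$ connected and shrinking $W$ to be connected; your local derivation and your parenthetical remark about components of $W$ meeting both $\Gamma$ and $\Gamma^c$ amount to the same thing.
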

\begin{proof}
Without loss of generality, we assume that $\Gamma^c$ is connected (if not then argue on each connected piece). By shrinking $W$ if necessary, we may assume that also $W$ is connected and $q_- \neq 0$ in $W$.

Let $(f,g)$ be an element of $C_{V_1}^{\partial \Omega}$, so that there is a solution $u_1 \in H^1(\Omega)^4$ of $\mathcal{L}_{V_1} u_1 = 0$ in $\Omega$ such that $u_{1,+} = f$ and $u_{1,-} = g$ on $\partial \Omega$. Since $C_{V_1}^{\Gamma} = C_{V_2}^{\Gamma}$, there is a solution $u_2 \in H^1(\Omega)^4$ of $\mathcal{L}_{V_2} u_2 = 0$ in $\Omega$ satisfying 
\begin{equation*}
u_{1,+} = u_{2,+} \text{ on }\partial \Omega, \quad u_{1,-} = u_{2,-} \text{ on }\Gamma.
\end{equation*}
Set $u = u_1 - u_2$. Then clearly $u_+ = 0$ on $\partial \Omega$ and $u_- = 0$ on $\Gamma$, and by \eqref{ddmap_normalderivative} we also have $\partial_{\nu} u_+ = 0$ on $\Gamma$. Furthermore, since all coefficients are identical in $W$ and $q_- \neq 0$ in $W$, we have that $u_+$ satisfies 
\begin{equation*}
\left\{ \begin{array}{rll}
(-\Delta I_2 + 2(A \cdot D) I_2 - \frac{1}{q_-} (\sigma \cdot Dq_-) \sigma \cdot D + \tilde{Q})u_+ &\!\!\!= 0 & \quad \text{in } W, \\
u_+ = \partial_{\nu} u_+ &\!\!\!= 0 & \quad \text{on } W \cap \Gamma,
\end{array} \right.
\end{equation*}
where $\tilde{Q}$ is some smooth $2 \times 2$ matrix.

The last system has scalar principal part, and the unique continuation principle holds (this can be seen by applying a scalar Carleman estimate to both components of $u_+$, for details see \cite{KSaU}). Since $W$ is connected we conclude that $u_+ = 0$ in $W$, and consequently $\partial_{\nu} u_+$ vanishes on $\Gamma^c$. Since $q_- \neq 0$ in $W$, the relation \eqref{ddmap_normalderivative} again implies that $u_- = 0$ on all of $\partial \Omega$. We have proved that $(f,g) \in C_{V_2}^{\partial \Omega}$, showing that $C_{V_1}^{\partial \Omega} \subseteq C_{V_2}^{\partial \Omega}$. The inclusion $C_{V_2}^{\partial \Omega} \subseteq C_{V_1}^{\partial \Omega}$ is analogous.
\end{proof}

We have proved that all the conditions in the preceding lemma hold, so we obtain that $C_{V_1}^{\partial \Omega} = C_{V_2}^{\partial \Omega}$. The uniqueness result in \cite{nakamuratsuchida} (or \cite{salotzou}) for the full data case then implies that $q_{1,+} = q_{2,+}$ in $\Omega$. This ends the proof of Theorem \ref{thm:uniqueness}.

\section{Carleman estimates} \label{sec:carleman}

In this section we prove Carleman estimates and establish Lemmas \ref{lemma:boundaryterm_magnetic} and \ref{lemma:boundaryterm_electric} which allow to take care of the boundary term in the identity \eqref{matrix_integral_identity}. This involves an estimate for $\partial_{\nu} U_+$ on part of the boundary. To explain the strategy, we note that any solution $u$ of $\mathcal{L}_V u = 0$ in $\Omega$ satisfies 
\begin{align*}
\sigma \cdot (D+A) u_- + q_+ u_+ &= 0, \\
\sigma \cdot (D+A) u_+ + q_- u_- &= 0.
\end{align*}
Then in the set where $q_- \neq 0$, the equations decouple and we see that $u_+$ satisfies the second order equation 
\begin{equation} \label{carleman_secondorderequation}
\sigma \cdot (D+A) \left( \frac{1}{q_-} \sigma \cdot (D+A) u_+ \right) - q_+ u_+ = 0.
\end{equation}
We will estimate $\partial_{\nu} u_+$ on part of the boundary by using a Carleman estimate for a second order equation, as in \cite{ksu}. However, to account for the set where $q_-$ is small we need to do the analysis very carefully, cutting off the coefficients in a suitable $h$-dependent way and also letting the convexification depend on $h$. The details are given in the following result.

We will use below the notation given in the beginning of Section \ref{sec:integral_identity}, and also the sets $\partial \Omega_{\pm} = \{ x \in \partial \Omega \,;\, \pm \nabla \varphi(x) \cdot \nu(x) \geq 0 \}$. Further, we consider the semiclassical Sobolev spaces with norm defined by 
\begin{equation*}
\norm{u}_{H^s_{\text{scl}}} = \norm{(1+(hD)^2)^{s/2} u}_{L^2(\mR^n)}, \quad u \in C_c^{\infty}(\mR^n), \ s \in \mR.
\end{equation*}
In particular we will consider the case $s=1$ with the equivalent norm $\norm{u}_{H^1_{\text{scl}}} = \norm{u} + \norm{hDu}$ for $u \in H^1_0(\Omega)$.

\begin{lemma} \label{lemma:carleman_hdependent}
Let $A \in C^{\infty}(\closure{\Omega} \,;\, \mR^3)$ and $q_-, \tilde{q} \in C^{\infty}(\closure{\Omega})$, and let $\varphi$ be a limiting Carleman weight near $\closure{\Omega}$. Let $0 < \alpha < 1$, and let 
\begin{align*}
\hat{A}(x,D) &= \left\{ \begin{array}{cl} 2A \cdot D - \frac{1}{q_-}(\sigma \cdot Dq_-)\sigma \cdot D, & \left| \frac{1}{q_-} \right| \leq \sqrt{\abs{\log h^{\alpha}}}, \\
2A \cdot D, & \text{otherwise}, \end{array} \right. \\
\hat{q}(x) &= \left\{ \begin{array}{cl} - \frac{1}{q_-}(\sigma \cdot Dq_-)\sigma \cdot A + \tilde{q}, & \left| \frac{1}{q_-} \right| \leq \sqrt{\abs{\log h^{\alpha}}}, \\ \tilde{q}, & \text{otherwise}. \end{array} \right.
\end{align*}
There exist constants $h_0$, $C_0$, $C$, where $C_0$ and $C$ are independent of $\alpha$, such that whenever $0 < h \leq h_0$ and when 
\begin{equation*}
\tilde{\varphi} = \varphi + \frac{h}{\varepsilon(h)} \frac{\varphi^2}{2}, \qquad \varepsilon(h) = (C_0 \abs{\log h^{\alpha}})^{-1},
\end{equation*}
one has the estimate 
\begin{multline*}
\frac{h^2}{\varepsilon(h)} (\norm{e^{\tphi/h} v}^2 + \norm{e^{\tphi/h} hDv}^2) - h^3 \int_{\partial \Omega_-} \partial_{\nu} \vphi \abs{e^{\tphi/h} \partial_{\nu} v}^2 \,dS \\
 \leq C \norm{e^{\tphi/h} h^2 (-\Delta + \hat{A}(x,D) + \hat{q}) v}^2 + C h^3 \int_{\partial \Omega_+} \partial_{\nu} \vphi \abs{e^{\tphi/h} \partial_{\nu} v}^2 \,dS,
\end{multline*}
for any $v \in H^2(\Omega)$ with $v|_{\partial \Omega} = 0$.
\end{lemma}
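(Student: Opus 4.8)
The goal is a Carleman estimate for the conjugated operator with weight $\tphi$, and the natural approach is the standard convexity-of-the-weight argument as in \cite{dksu}, \cite{ksu}, but tracking constants carefully because $\eps=\eps(h)$ depends on $h$ through the same logarithmic quantity that controls the cutoff in $\hat A$ and $\hat q$. First I would set $P_\tphi = e^{\tphi/h}h^2(-\Delta)e^{-\tphi/h}$, split it into self-adjoint and skew-adjoint parts $P_\tphi = A_\tphi + iB_\tphi$ with $A_\tphi = (hD)^2 - |\nabla\tphi|^2$ and $B_\tphi = \nabla\tphi\cdot hD + hD\cdot\nabla\tphi$, and compute $\|P_\tphi v\|^2 = \|A_\tphi v\|^2 + \|B_\tphi v\|^2 + i([A_\tphi,B_\tphi]v|v) + \text{boundary terms}$. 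The commutator $i[A_\tphi,B_\tphi]$ is, up to lower order, $4h\,\mathrm{Hess}(\tphi)[hD,hD] + h\,(\text{zeroth order involving }\Delta\tphi\text{ and }|\nabla\varphi|^2)$; the limiting-Carleman-weight hypothesis makes the principal symbol of $A_\tphi$ and $B_\tphi$ vanish together, and the convexifying term $\frac{h}{\eps}\frac{\varphi^2}{2}$ contributes a positive definite piece of size $\gtrsim \frac{h^2}{\eps}(\|v\|^2 + \|hDv\|^2)$ to the combination $\|A_\tphi v\|^2 + \|B_\tphi v\|^2 + i([A_\tphi,B_\tphi]v|v)$, which is where the gain comes from. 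This is exactly the estimate proved in \cite[Section 2]{ksu} except with $\eps$ no longer a fixed small number.

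Second, I would do the integration-by-parts bookkeeping for the boundary contributions. With $v|_{\partial\Omega}=0$ the only surviving boundary term is the one involving $\partial_\nu v$, and it comes out as $\pm h^3\int_{\partial\Omega} \partial_\nu\tphi\,|e^{\tphi/h}\partial_\nu v|^2\,dS$; since $\partial_\nu\tphi = (1 + \frac{h}{\eps}\varphi)\partial_\nu\varphi$ and $\frac{h}{\eps}\varphi = C_0|\log h^\alpha|^{-1}\cdot h\cdot\varphi \to 0$, the factor $(1+\frac{h}{\eps}\varphi)$ is $1+o(1)$, uniformly, so $\partial\Omega_\pm$ for $\tphi$ and for $\varphi$ agree up to absorbable errors and we may replace $\partial_\nu\tphi$ by $\partial_\nu\varphi$ on the boundary, producing precisely the $\int_{\partial\Omega_-}$ term on the left and the $\int_{\partial\Omega_+}$ term on the right.

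Third — and this is the step that needs the specific form of $\eps(h)$ — I would absorb the first-order perturbation $\hat A(x,D)$ and the zeroth-order $\hat q$. Conjugating and using the a priori bound, $\|e^{\tphi/h}h^2\hat A(x,D)v\| \lesssim \|\hat A\|_{L^\infty}\cdot h\,\|e^{\tphi/h}hDv\|$ where $\|\hat A\|_{L^\infty}\lesssim 1 + |1/q_-|_{\text{cutoff}} \lesssim 1 + \sqrt{|\log h^\alpha|}$ by the definition of the cutoff, and similarly $\|e^{\tphi/h}h^2\hat q v\| \lesssim (1+\sqrt{|\log h^\alpha|})\,h^2\|e^{\tphi/h}v\|$. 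To absorb these into the left-hand side gain of $\frac{h^2}{\eps} = h^2 C_0|\log h^\alpha|$, I need $h^2\cdot(1+\sqrt{|\log h^\alpha|})^2\cdot\|e^{\tphi/h}hDv\|^2 \ll \frac{h^2}{\eps}\|e^{\tphi/h}hDv\|^2$, i.e. $(1+\sqrt{|\log h^\alpha|})^2 \ll C_0|\log h^\alpha|$; this holds for $h$ small once $C_0$ is fixed large enough (independently of $\alpha$, since $|\log h^\alpha| = \alpha|\log h|$ and the ratio $(1+\sqrt{\alpha|\log h|})^2/(\alpha|\log h|)\to 1$ uniformly in $\alpha\in(0,1)$ as $h\to 0$). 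This is the quantitative heart of the lemma and the main obstacle: everything must be arranged so the single constant $C_0$ works for all $\alpha$, which forces the square root in the cutoff threshold $|1/q_-|\leq\sqrt{|\log h^\alpha|}$ to be matched against the linear growth of $1/\eps$. Once the perturbation is absorbed, the remaining $\|P_\tphi v + h^2\hat A v + h^2\hat q v\|^2$ is comparable to $\|e^{\tphi/h}h^2(-\Delta+\hat A+\hat q)v\|^2$, and collecting terms gives the stated inequality with $C$ independent of $\alpha$.
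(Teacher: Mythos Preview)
Your proposal is correct and follows essentially the same approach as the paper: decompose the conjugated Laplacian into self-adjoint and skew-adjoint parts, use the limiting Carleman weight condition to express the commutator as a positive term of size $h^2/\eps(h)$ plus terms absorbable by $\|Av\|^2+\|Bv\|^2$, handle the boundary term via $\partial_\nu\tphi = (1+\tfrac{h}{\eps}\varphi)\partial_\nu\varphi$, and then absorb the first-order perturbation using $\|\hat A\|_{L^\infty}\lesssim\sqrt{|\log h^\alpha|}$ against the gain $1/\eps(h)=C_0|\log h^\alpha|$. Two small slips to fix: you wrote $\tfrac{h}{\eps}\varphi = C_0|\log h^\alpha|^{-1}\cdot h\cdot\varphi$ but it is $C_0|\log h^\alpha|\cdot h\cdot\varphi$ (still $o(1)$), and the convergence $(1+\sqrt{\alpha|\log h|})^2/(\alpha|\log h|)\to 1$ is not uniform in $\alpha$ --- but this is harmless since the lemma allows $h_0$ to depend on $\alpha$, and the paper likewise chooses $C_0\geq 2M^2$ with $M$ depending only on the coefficients.
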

\begin{proof}
The proof follows ideas in \cite{ksu} (see also \cite{salotzou}). First let $C_0$ be a fixed number, and initially choose $h_0 = 1$. Below we will replace $h_0$ by smaller constants when needed, and $M \geq 1$ will denote a changing constant depending only on $\varphi$ and the coefficients $A$, $q_-$, and $\tilde{q}$.

Write $\tphi = f(\varphi)$ where $f(\lambda) = \lambda + \frac{h}{\eps(h)} \frac{\lambda^2}{2}$, and introduce the conjugated operator $P_{0,\tphi} = e^{\tphi/h} (-h^2 \Delta) e^{-\tphi/h} = A + i B$ where $A$ and $B$ are the formally self-adjoint operators 
\begin{equation*}
A = (hD)^2 - (\nabla \tphi)^2, \quad B = \nabla \tphi \circ hD + hD \circ \nabla \tphi.
\end{equation*}
Then, if $v$ is as above, integration by parts gives that 
\begin{equation*}
\norm{P_{0,\tphi} v}^2 = \norm{Av}^2 + \norm{Bv}^2 + (i[A,B]v|v) - 2h^3 ((\partial_{\nu} \tphi) \partial_{\nu} v| \partial_{\nu} v)_{\partial \Omega}.
\end{equation*}
In terms of symbols one has $i[A,B] = h \mOp_h(\{a,b\})$. The limiting Carleman condition implies, as in \cite[Section 3]{ksu} and \cite[Lemma 2.1]{salotzou}, that 
\begin{equation*}
\{a,b\}(x,\xi) = \frac{4h}{\eps(h)} f'(\varphi)^2 \abs{\nabla \varphi}^4 + m(x) a(x,\xi) + l(x,\xi) b(x,\xi)
\end{equation*}
where 
\begin{equation*}
m(x) = -4 f'(\varphi) \frac{\varphi'' \nabla \varphi \cdot \nabla \varphi}{\abs{\nabla \varphi}^2}, \quad l(x,\xi) = \left( \frac{4\varphi'' \nabla \varphi}{\abs{\nabla \varphi}^2} + \frac{2 f''(\varphi)}{f'(\varphi)} \nabla \varphi \right) \cdot \xi.
\end{equation*}
Here we have chosen $h_0$ so that $\frac{h}{\eps(h)} \max(\sup\,\abs{\varphi},1) \leq 1/2$ for $h \leq h_0$, which ensures that $f'(\varphi) \geq 1/2$. Quantization gives 
\begin{equation*}
i[A,B] = \frac{4h^2}{\eps(h)} f'(\varphi)^2 \abs{\nabla \varphi}^4 + \frac{h}{2} \left[ m \circ A + A \circ m + L \circ B + B \circ L \right] + h^2 \tilde{q}(x)
\end{equation*}
where $\tilde{q}$ is a smooth function whose $C^k$ norms are uniformly bounded in $h$. Since $\abs{\nabla \varphi}$ is positive near $\closure{\Omega}$, we have 
\begin{multline*}
\norm{P_{0,\tphi} v}^2 \geq \norm{Av}^2 + \norm{Bv}^2 + \frac{h^2}{M \eps(h)} \norm{v}^2 \\
 - Mh \norm{v} \,\norm{Av} - Mh \norm{v}_{H^1_{\text{scl}}} \norm{Bv} - Mh^2 \norm{v}^2 - 2h^3 ((\partial_{\nu} \tphi) \partial_{\nu} v| \partial_{\nu} v)_{\partial \Omega}.
\end{multline*}
This used integration by parts and the fact that $v|_{\partial \Omega} = 0$. We obtain 
\begin{multline*}
\norm{P_{0,\tphi} v}^2 \geq \frac{1}{2} \norm{Av}^2 + \frac{1}{2} \norm{Bv}^2 + \frac{h^2}{M \eps(h)} \norm{v}^2 - M h^2 \norm{v}_{H^1_{\text{scl}}}^2 \\
 - 2h^3 ((\partial_{\nu} \tphi) \partial_{\nu} v| \partial_{\nu} v)_{\partial \Omega}.
\end{multline*}
For the term involving $\norm{v}_{H^1_{\text{scl}}}$ we note that 
\begin{equation*}
\norm{hDv}^2 = ((hD)^2 v|v) = (Av|v) + (\abs{\nabla \tphi}^2 v|v) \leq \norm{Av}^2 + M \norm{v}^2,
\end{equation*}
and by the assumptions on $h_0$ we have that 
\begin{equation*}
\frac{1}{2} \norm{Av}^2 \geq \frac{h^2}{2M^2 \eps(h)} \norm{Av}^2 \geq \frac{h^2}{2M^2 \eps(h)} \norm{hDv}^2 - \frac{h^2}{2 M \eps(h)} \norm{v}^2.
\end{equation*}
Therefore, the Carleman estimate becomes 
\begin{equation*}
\norm{P_{0,\tphi} v}^2 \geq \frac{h^2}{M \eps(h)} \norm{v}_{H^1_{\text{scl}}}^2 - 2h^3 ((\partial_{\nu} \tphi) \partial_{\nu} v| \partial_{\nu} v)_{\partial \Omega}.
\end{equation*}

Next, consider the operator 
\begin{align*}
P_{\tphi} &= h^2 e^{\tphi/h} (-\Delta + \hat{A}(x,D) + \hat{q}) e^{-\tphi/h} \\
 &= P_{0,\tphi} + h \hat{A}(x,hD+i\nabla \tphi) + h^2 \hat{q}.
\end{align*}
We have 
\begin{multline*}
\frac{h^2}{M \eps(h)} \norm{v}_{H^1_{\text{scl}}}^2 - 2h^3 ((\partial_{\nu} \tphi) \partial_{\nu} v| \partial_{\nu} v)_{\partial \Omega} \\
 \leq 4 \norm{P_{\tphi} v}^2 + 4 h^2 \norm{\hat{A}(x,hD+i\nabla \tphi)v}^2 + 4 h^4 \norm{\hat{q} v}^2.
\end{multline*}
If $h_0$ is chosen so that $\abs{\log(h^{\alpha})} \geq 1$ for $h \leq h_0$, then by the definition of $\hat{A}(x,D)$ and $\hat{q}(x)$ it holds that 
\begin{eqnarray*}
 & \norm{\hat{A}(x,hD+i\nabla \tphi)v} \leq M \sqrt{\abs{\log(h^{\alpha})}} \norm{v}_{H^1_{\text{scl}}}, & \\
 & \norm{\hat{q} v} \leq M \sqrt{\abs{\log(h^{\alpha})}} \norm{v}. & 
\end{eqnarray*}
Thus 
\begin{equation*}
\frac{h^2}{M \eps(h)} \norm{v}_{H^1_{\text{scl}}}^2 - 2h^3 ((\partial_{\nu} \tphi) \partial_{\nu} v| \partial_{\nu} v)_{\partial \Omega} \\
 \leq M \norm{P_{\tphi} v}^2 + M h^2 \abs{\log(h^{\alpha})} \norm{v}_{H^1_{\text{scl}}}^2.
\end{equation*}
At this point we choose $C_0$ so that $C_0 \geq 2 M^2$, which implies 
\begin{equation*}
M h^2 \abs{\log(h^{\alpha})} \leq \frac{h^2}{2 M \eps(h)}.
\end{equation*}
With this choice, we arrive at the Carleman estimate 
\begin{equation*}
\frac{h^2}{M \eps(h)} \norm{v}_{H^1_{\text{scl}}}^2 - 2h^3 ((\partial_{\nu} \tphi) \partial_{\nu} v| \partial_{\nu} v)_{\partial \Omega} \leq M \norm{P_{\tphi} v}^2.
\end{equation*}

Finally, replacing $v$ by $e^{\tphi/h} v$ gives that 
\begin{multline*}
\frac{h^2}{M \eps(h)} (\norm{e^{\tphi/h} v}^2 + \norm{hD(e^{\tphi/h} v)}^2) - 2h^3 ((\partial_{\nu} \tphi) e^{\tphi/h} \partial_{\nu} v| e^{\tphi/h} \partial_{\nu} v)_{\partial \Omega} \\
 \leq M \norm{e^{\tphi/h} h^2(-\Delta + \hat{A}(x,D) + \hat{q}) v}^2.
\end{multline*}
Since $\partial_{\nu} \tphi = f'(\vphi) \partial_{\nu} \vphi$, the result follows.
\end{proof}

\begin{remark}
The motivation for the choice of $\eps(h)$ comes from the fact that $e^{\tphi/h} \leq h^{-C\alpha} e^{\vphi/h}$ where $C$ is independent of $\alpha$. Here the factor $h^{-C\alpha}$ can be controlled by positive powers of $h$ if $\alpha$ is chosen small enough.
\end{remark}

It will be essential to use the convexified weight $\tphi$ instead of $\vphi$, since we will need the stronger constants obtained from convexification to carry out the estimates for boundary terms. Next we give a Carleman estimate for the Dirac operator, which will also be required for controlling the boundary terms.

\begin{lemma} \label{lemma:carleman_dirac}
Let $A \in C^{\infty}(\closure{\Omega} \,;\, \mR^3)$ and $q \in C^{\infty}(\closure{\Omega})$, and let $\varphi$ be a limiting Carleman weight near $\closure{\Omega}$. Suppose $\alpha$, $C_0$, $\eps(h)$, and $\tphi$ are as in Lemma \ref{lemma:carleman_hdependent}. There exist $C, h_0 > 0$, with $C$ independent of $\alpha$, such that for $0 < h \leq h_0$ one has 
\begin{equation*}
\norm{e^{\tphi/h} u}^2 \leq C \eps(h) \norm{ e^{\tphi/h} (\sigma \cdot (D+A) + q I_2) u}^2, \quad u \in H^1_0(\Omega)^2.
\end{equation*}
\end{lemma}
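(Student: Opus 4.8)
The plan is to deduce this first-order Carleman estimate for the Dirac operator from the second-order estimate already established in Lemma~\ref{lemma:carleman_hdependent}, by squaring the Dirac operator. Write $L = \sigma\cdot(D+A) + qI_2$ and observe that $\bar L = -\sigma\cdot(D+A) + qI_2$ satisfies $\bar L L = -(\sigma\cdot(D+A))^2 + q^2 I_2 + [\text{first order}]$. Using the algebraic identity $(\sigma\cdot a)(\sigma\cdot b) = (a\cdot b)I_2 + i\sigma\cdot(a\times b)$ one has $(\sigma\cdot(D+A))^2 = (D+A)^2 I_2 + \sigma\cdot(\nabla\times A)$, so $\bar L L = -(D+A)^2 I_2 + (\text{zeroth and first order terms with smooth coefficients})$. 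Expanding $(D+A)^2 = -\Delta + 2A\cdot D + (\text{bounded})$, we see that $\bar L L$ has the form $-\Delta I_2$ plus a first-order perturbation $2A\cdot D$ plus a bounded zeroth-order term — precisely the type of operator covered by Lemma~\ref{lemma:carleman_hdependent} in the regime where $|1/q_-|$ exceeds the cutoff, i.e.\ with $\hat A(x,D) = 2A\cdot D$ and $\hat q$ the resulting smooth zeroth order matrix. (Here the $q_-$ appearing in Lemma~\ref{lemma:carleman_hdependent} is auxiliary; we may simply take the coefficient there so that the ``otherwise'' branch is always in force, or equivalently note the estimate with $\hat A = 2A\cdot D$ holds for all $h$ small.)

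Concretely, first I would apply Lemma~\ref{lemma:carleman_hdependent} to $v = u$ component-wise (the operator $-\Delta + 2A\cdot D + \hat q$ acts diagonally enough after the reduction, or one simply applies the scalar estimate to each of the two components and sums). For $u \in H^1_0(\Omega)^2$ we have $v|_{\partial\Omega}=0$, and since $\partial_\nu\varphi \le 0$ on $\partial\Omega_-$ and $\partial_\nu\varphi\ge 0$ on $\partial\Omega_+$, the boundary term $-h^3\int_{\partial\Omega_-}\partial_\nu\varphi\,|e^{\tphi/h}\partial_\nu v|^2\,dS$ on the left is nonnegative and may be dropped, while the boundary term on the right is a genuine obstruction unless it too can be discarded — but it cannot in general, since $u$ need not have vanishing normal derivative. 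This is the point where I would instead keep the full estimate and absorb: rewrite $h^2(-\Delta + 2A\cdot D + \hat q)u = h^2 \bar L L u / (\text{normalization})$ up to lower order, so that $\|e^{\tphi/h} h^2(-\Delta+\cdots)u\| \lesssim \|e^{\tphi/h} h^2 \bar L L u\| + h^2(\text{l.o.t.})$, and then use $\|\bar L L u\| \lesssim \|D(Lu)\| + \|Lu\| \lesssim h^{-1}\|Lu\|_{H^1_{\mathrm{scl}}}$-type bounds. The cleanest route, though, is to apply the second-order estimate to $v=u$ and separately the elementary observation that $\|e^{\tphi/h} h \cdot hD(Lu)\|$ and $\|e^{\tphi/h} h^2 Lu\|$ control $\|e^{\tphi/h}h^2\bar L L u\|$ after conjugation, giving $\frac{h^2}{\eps(h)}\|e^{\tphi/h}u\|_{H^1_{\mathrm{scl}}}^2 \lesssim \|e^{\tphi/h}h(\sigma\cdot D + \ldots)(Lu)\|^2 + (\text{boundary})$; but to kill the boundary term one notes that for the specific construction $u$ will be taken with $u|_{\partial\Omega}=0$ only, so the correct statement to aim for is the one as written, with no boundary terms, which forces us to avoid integrating by parts twice.

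Therefore the actual argument I would write is the following. Conjugate $L$ directly: set $L_{\tphi} = e^{\tphi/h}(\sigma\cdot(D+A)+qI_2)e^{-\tphi/h} = \sigma\cdot(hD + i\nabla\tphi + hA) + hqI_2$, divided by $h$; then $\bar L_{\tphi} L_{\tphi}$ equals the second-order conjugated operator $P_{0,\tphi}$ plus $h$ times a first-order semiclassical operator with coefficients bounded by $M\sqrt{|\log h^\alpha|}$ (because of the factor $\nabla\tphi = f'(\varphi)\nabla\varphi$ and $f'(\varphi)\le 1 + \frac{h}{\eps(h)}\sup|\varphi| \lesssim 1$, but the zeroth order pieces from $q^2$, $\nabla\times A$ are $O(1)$, and no blow-up actually occurs here since there is no $1/q_-$). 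Apply the estimate $\frac{h^2}{M\eps(h)}\|w\|_{H^1_{\mathrm{scl}}}^2 \le M\|P_{0,\tphi} w\|^2 + (\text{boundary terms with }\partial_\nu\varphi\text{ signs})$ with $w = e^{\tphi/h}u$; since $u\in H^1_0$ one has $w|_{\partial\Omega}=0$, and crucially for the \emph{first}-order estimate we only do one integration by parts when passing from $\|\bar L_{\tphi}L_{\tphi} w\|$ to $\|L_{\tphi} w\|$, which produces a boundary term $\int_{\partial\Omega}(\ldots)|w|^2\,dS$ that vanishes because $w|_{\partial\Omega}=0$. Then $\|P_{0,\tphi}w\| \le \|\bar L_{\tphi}L_{\tphi}w\| + Mh\sqrt{|\log h^\alpha|}\,\|L_{\tphi}w\|_{H^1_{\mathrm{scl}}} + \ldots$, and one bounds $\|\bar L_{\tphi}w\|_{L^2} \lesssim h^{-1}\|L_{\tphi}w\|_{H^1_{\mathrm{scl}}}$ crudely; combining and using $\eps(h) = (C_0|\log h^\alpha|)^{-1}$ to absorb the logarithmic factors exactly as in the proof of Lemma~\ref{lemma:carleman_hdependent} (choosing $C_0$ large), all the error terms are swallowed by the left side, leaving $\frac{h^2}{M\eps(h)}\|w\|^2 \lesssim \|L_\tphi w\|^2$, i.e.\ $\|e^{\tphi/h}u\|^2 \le C\eps(h)\|e^{\tphi/h}(\sigma\cdot(D+A)+qI_2)u\|^2$ after unwinding the $h$-normalization. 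The main obstacle is bookkeeping: making sure that squaring the Dirac operator does not reintroduce a $1/q_-$ singularity (it does not, since $q$ here is genuinely smooth and the dangerous coefficient in Lemma~\ref{lemma:carleman_hdependent} only enters through $\hat A,\hat q$ which we are free to take in their ``otherwise'' branch), and that the single integration by parts needed for the first-order-to-second-order passage produces only boundary terms that vanish for $u\in H^1_0(\Omega)^2$.
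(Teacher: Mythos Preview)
Your general idea—factor the conjugated Laplacian as a product of two conjugated Dirac operators and feed this into the second-order estimate—is the right one, and it is what the paper does. But the argument as written has a genuine gap in the final step. After writing $P_{0,\tphi} = \bar L_{\tphi} L_{\tphi} + (\text{lower order})$ and invoking Lemma~\ref{lemma:carleman_hdependent}, you obtain
\[
\frac{h^2}{\eps(h)}\norm{w}_{H^1_{\text{scl}}}^2 \lesssim \norm{\bar L_{\tphi} L_{\tphi} w}_{L^2}^2 + (\text{errors}),
\]
and then you bound $\norm{\bar L_{\tphi}(L_{\tphi}w)}_{L^2} \lesssim \norm{L_{\tphi}w}_{H^1_{\text{scl}}}$ because $\bar L_{\tphi}$ is a first-order semiclassical operator. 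This leaves $\norm{L_{\tphi}w}_{H^1_{\text{scl}}}^2$ on the right, not $\norm{L_{\tphi}w}_{L^2}^2$, and there is no way to pass from the former to the latter: you have lost a derivative. The assertion ``leaving $\frac{h^2}{M\eps(h)}\norm{w}^2 \lesssim \norm{L_\tphi w}^2$'' is therefore not justified. Relatedly, the boundary-term discussion is tangled: Lemma~\ref{lemma:carleman_hdependent} requires $v\in H^2(\Omega)$ and produces terms in $\partial_\nu v$, neither of which you control for a general $u\in H^1_0(\Omega)^2$.

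The paper repairs both issues at once by first taking $u\in C_c^\infty(\Omega)^2$ (so all boundary terms vanish and regularity is automatic) and then \emph{shifting the second-order estimate down one Sobolev index}, from $H^1_{\text{scl}}\to L^2$ to $L^2\to H^{-1}_{\text{scl}}$. That is, one proves
\[
\frac{h^2}{\eps(h)}\norm{w}_{L^2}^2 \lesssim \norm{P_{0,\tphi} w}_{H^{-1}_{\text{scl}}}^2,
\]
and then the factorization gives $\norm{P_{0,\tphi}w}_{H^{-1}_{\text{scl}}} = \norm{\br{hD}^{-1}P_{\tphi}(P_{\tphi}w)}_{L^2} \lesssim \norm{P_{\tphi}w}_{L^2}$ because $\br{hD}^{-1}P_{\tphi}$ is a semiclassical operator of order~$0$. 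This yields exactly $\frac{h^2}{\eps(h)}\norm{w}^2 \lesssim \norm{P_{\tphi}w}^2$ with the $L^2$ norm on the right. One then perturbs by the bounded lower-order terms $hA,hq$ (no $\sqrt{\abs{\log h^\alpha}}$ factors appear here, since $A$ and $q$ are fixed smooth coefficients), and finally passes from $C_c^\infty(\Omega)^2$ to $H^1_0(\Omega)^2$ by density. The Sobolev shift is the missing idea in your attempt.
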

\begin{proof}
We follow the argument in \cite[Lemma 2.2]{salotzou}, where more details are given. The Carleman estimate in Lemma \ref{lemma:carleman_hdependent} implies that 
\begin{equation*}
\frac{h^2}{\varepsilon(h)} \norm{u}_{H^1_{\text{scl}}}^2 \leq C \norm{e^{\tphi/h} (-h^2 \Delta I_2) e^{\tphi/h} u}_{L^2}^2,
\end{equation*}
for all $u \in C^{\infty}_c(\Omega)^2$. It is possible to shift the estimate to a lower Sobolev index and prove that for $h$ small one has 
\begin{equation*}
\frac{h^2}{\varepsilon(h)} \norm{u}_{L^2}^2 \leq C \norm{e^{\tphi/h} (-h^2 \Delta I_2) e^{\tphi/h} u}_{H^{-1}_{\text{scl}}}^2.
\end{equation*}
Now write $e^{\tphi/h} (-h^2 \Delta I_2) e^{\tphi/h} = P_{\tphi}^2$ where $P_{\tphi}(hD) = e^{\tphi/h} (\sigma \cdot hD) e^{-\tphi/h}$. Since $\br{hD}^{-1} P_{\tphi}(hD)$ is an operator of order $0$, we obtain 
\begin{equation*}
\frac{h^2}{\varepsilon(h)} \norm{u}_{L^2}^2 \leq C \norm{e^{\tphi/h} (\sigma \cdot hD) e^{-\tphi/h} u}_{L^2}^2.
\end{equation*}
If $h$ is small (so $1/\eps(h)$ is large), we may replace $\sigma \cdot hD$ by $\sigma \cdot (hD + hA) + hq I_2$ in the last inequality. This shows the desired estimate for $u \in C^{\infty}_c(\Omega)^2$, and the result is then valid for $u \in H^1_0(\Omega)^2$ by approximation.
\end{proof}

We may now give the proof of Lemma \ref{lemma:boundaryterm_magnetic}, which provides an estimate for the part of the boundary term required for determining the magnetic field.

\begin{proof}[Proof of Lemma \ref{lemma:boundaryterm_magnetic}]
Recall that $U_j$ are solutions of $\mathcal{L}_{V_j} U_j = 0$ in $\Omega$, where 
\begin{align*}
U_{1,+} &= e^{\rho/h} \left[ -\frac{1}{z} r^{-1/2} e^{i\phi_1} a_1 \begin{pmatrix} 0 & \sigma \cdot \zeta \end{pmatrix}_{2 \times 4} + \begin{pmatrix} \hat{R}_1 & \hat{R}_1' \end{pmatrix}_{2 \times 4} \right], \\
U_{2,+}^* &= e^{-\rho/h} \Big[ \frac{1}{z} r^{-1/2} e^{-i\phi_2} a_2 \begin{pmatrix} 0 \\ \sigma \cdot \zeta \end{pmatrix}_{4 \times 2} + \begin{pmatrix} \hat{R}_2 \\ \hat{R}_2' \end{pmatrix}_{4 \times 2} \Big],
\end{align*}
with $\norm{\hat{R}_1}_{H^1(\Omega)} \lesssim h$, $\norm{\hat{R}_2}_{H^1(\Omega)} \lesssim h$. Also, $U = U_1 - \tilde{U}_2$ where $\tilde{U}_2$ solves $\mathcal{L}_{V_2} \tilde{U}_2 = 0$ in $\Omega$ with $\tilde{U}_{2,+}|_{\partial \Omega} = U_{1,+}|_{\partial \Omega}$. Thus $U_+|_{\partial \Omega} = 0$, and we have $U_-|_{\Gamma} = \partial_{\nu} U_+|_{\Gamma} = 0$ and also $U_+ \in (H^2 \cap H^1_0(\Omega))^{2 \times 4}$ by Lemma \ref{lemma:integral_identity}.

Denote by $J$ the upper right $2 \times 2$ block of $(\frac{1}{q_{1,-}} \partial_{\nu} U_+|U_{2,+})_{\Gamma^c}$. Writing $W = W_1 - \tilde{W}_2$ where $W_1$ and $\tilde{W}_2$ are the right $4 \times 2$ blocks of $U_1$ and $\tilde{U}_2$, respectively, we have 
\begin{align*}
J = \int_{\Gamma^c} e^{-\rho/h} \hat{R}_2 \frac{1}{q_{1,-}} \partial_{\nu} W_+ \,dS.
\end{align*}
Since $q_{1,-} \neq 0$ on $\Gamma^c$, we get (using the Frobenius norm on matrices) that 
\begin{equation*}
\norm{J}^2 \leq C \left( \int_{\Gamma^c} \norm{\hat{R}_2}^2 \,dx \right) \left( \int_{\Gamma^c} \norm{e^{-\varphi/h} \partial_{\nu} W_+}^2 \,dS \right).
\end{equation*}
Write $-\hphi = -\varphi + \frac{h}{\eps(h)} \frac{\varphi^2}{2}$ for the convexified weight corresponding to $-\varphi$, as in Lemma \ref{lemma:carleman_hdependent}. We note that $e^{-\varphi/h} = m e^{-\hphi/h}$ where $0 < m \leq 1$, and also the estimate $\norm{\hat{R}_2}_{L^2(\Gamma^c)} \leq C \norm{\hat{R}_2}_{H^1(\Omega)} \lesssim h$ which follows from the trace theorem. These facts yield 
\begin{equation} \label{first_j_estimate}
\norm{J}^2 \lesssim h^2 \norm{e^{-\hphi/h} \partial_{\nu} W_+}_{L^2(\Gamma^c)}^2.
\end{equation}

At this point we wish to use the Carleman estimate of Lemma \ref{lemma:carleman_hdependent}. This allows to estimate $\partial_{\nu} W_+$ by a second order operator applied to $W_+$. Since $W_+ = W_{1,+} - \tilde{W}_{2,+}$ where $W_1$ is a solution with explicit form which can be estimated, we want to choose the operator so that it will make terms involving $\tilde{W}_{2,+}$ vanish. Note that when $q_{2,-} \neq 0$, $\tilde{W}_{2,+}$ solves (columnwise) the equation 
\begin{equation*}
\sigma \cdot (D+A_2) \left( \frac{1}{q_{2,-}} \sigma \cdot (D+A_2) \tilde{W}_{2,+} \right) - q_{2,+} \tilde{W}_{2,+} = 0.
\end{equation*}
In the set where $q_{2,-} \neq 0$ this may be rewritten as 
\begin{equation} \label{w2tilde_equation}
[-\Delta I_2 + 2 (A_2 \cdot D) I_2 - \frac{1}{q_{2,-}} (\sigma \cdot Dq_{2,-}) \sigma \cdot (D+A_2) + \tilde{q}_2 I_2] \tilde{W}_{2,+} = 0,
\end{equation}
where $\tilde{q}_2 = A_2 \cdot A_2 + D \cdot A_2 - q_{2,+} q_{2,-}$. Since $W_+|_{\partial \Omega} = 0$, the estimate in Lemma \ref{lemma:carleman_hdependent}, applied to $-\varphi$ and $A_2$, $q_{2,-}$, and $\tilde{q}_2$, shows that 
\begin{multline*}
\frac{h^2}{\varepsilon(h)} (\norm{e^{-\hphi/h} W_+}^2 + \norm{e^{-\hphi/h} hDW_+}^2) + h^3 \int_{\partial \Omega_+} \partial_{\nu} \vphi \abs{e^{-\hphi/h} \partial_{\nu} W_+}^2 \,dS \\
 \lesssim \norm{e^{-\hphi/h} h^2 (-\Delta + \hat{A}_2(x,D) + \hat{q}_2) W_+}^2 - h^3 \int_{\partial \Omega_-} \partial_{\nu} \vphi \abs{e^{-\hphi/h} \partial_{\nu} W_+}^2 \,dS.
\end{multline*}
Recall that $\partial_{\nu} W_+|_{\Gamma} = 0$. Now $\Gamma$ is a neighborhood of the front face $F(x_0)$, but we have $F(x_0) = \partial \Omega_-$ since $\varphi$ was the logarithmic weight. This shows that the last boundary integral vanishes. Since $\partial_{\nu} \vphi > 0$ on $\Gamma^c$, the Carleman estimate can be written as 
\begin{multline*}
h^3 \int_{\Gamma^c} \abs{e^{-\hphi/h} \partial_{\nu} W_+}^2 \,dS + \frac{h^2}{\varepsilon(h)} (\norm{e^{-\hphi/h} W_+}^2 + \norm{e^{-\hphi/h} hDW_+}^2) \\
 \lesssim \norm{e^{-\hphi/h} h^2 (-\Delta + \hat{A}_2(x,D) + \hat{q}_2) W_+}^2.
\end{multline*}

Going back to \eqref{first_j_estimate}, we have arrived at 
\begin{multline} \label{j_carleman_estimate}
\norm{J}^2 + \frac{h}{\varepsilon(h)} (\norm{e^{-\hphi/h} W_+}^2 + \norm{e^{-\hphi/h} hDW_+}^2 ) \\
\lesssim h^3 \norm{e^{-\hphi/h} (-\Delta + \hat{A}_2(x,D) + \hat{q}_2) W_+}^2.
\end{multline}
Let $S_h = \{x \in \Omega \,;\, \left| \frac{1}{q_{2,-}} \right| \leq \sqrt{\abs{\log h^{\alpha}}} \}$ be a subset of $\Omega$ where $q_{2,-}$ is bounded away from zero, with an $h$-dependent bound. The proof will then be completed by establishing the following two estimates:
\begin{eqnarray}
 & h^3 \norm{e^{-\hphi/h} (-\Delta + \hat{A}_2(x,D) + \hat{q}_2) W_+}_{L^2(S_h)}^2 =  o(1), & \label{carleman_claim1} \\
 & h^3 \norm{e^{-\hphi/h} (-\Delta + \hat{A}_2(x,D) + \hat{q}_2) W_+}_{L^2(\Omega \smallsetminus S_h)}^2 & \notag \\
 & \lesssim o(1) + h (\norm{e^{-\hphi/h} W_+}^2 + \norm{e^{-\hphi/h} hDW_+}^2). & \label{carleman_claim2} 
\end{eqnarray}

\emph{Proof of \eqref{carleman_claim1}.} By \eqref{w2tilde_equation}, we have $(-\Delta + \hat{A}_2(x,D) + \hat{q}_2) W_+ = (-\Delta + \hat{A}_2(x,D) + \hat{q}_2) W_{1,+}$ in $S_h$. To obtain sufficient decay in $h$, we will need to convert $-\Delta W_{1,+}$ into first order derivatives of $W_{1,\pm}$ by noting that $W_1$ solves 
\begin{equation*}
\sigma \cdot (D+A_1) W_{1,+} + q_{1,-} W_{1,-} = 0.
\end{equation*}
Then applying $\sigma \cdot (D+A_1)$ implies 
\begin{equation*}
-\Delta W_{1,+} = -2A_1 \cdot D W_{1,+} - q_{1,-} \sigma \cdot D W_{1,-} + Q_{1,+} W_{1,+} + Q_{1,-} W_{1,-}
\end{equation*}
for some smooth matrices $Q_{1,\pm}$. Thus, the most significant terms in the expression $(-\Delta + \hat{A}_2(x,D) + \hat{q}_2) W_{1,+}$, regarding growth in $h$, are $\hat{A}_2(x,D) W_{1,+}$ and $-2A_1 \cdot D W_{1,+}$ and $- q_{1,-} \sigma \cdot D W_{1,-}$. Since $W_{1,+} = e^{\rho/h} M$ and $W_{1,-} = e^{\rho/h} R$ where $\norm{M}_{H^1(\Omega)} \lesssim 1$, $\norm{R}_{H^1(\Omega)} \lesssim h$, we have 
\begin{multline*}
h^3 \norm{e^{-\hphi/h} (-\Delta + \hat{A}_2(x,D) + \hat{q}_2) W_+}_{L^2(S_h)}^2 \\
 \lesssim h \norm{e^{-\hphi/h} e^{\rho/h} \hat{A}_2(x,hD-i\nabla \rho) M}_{L^2(S_h)}^2 + h \norm{e^{-\hphi/h} e^{\rho/h} M}_{L^2(S_h)}^2 \\ 
 + h \norm{e^{-\hphi/h} e^{\rho/h} R}_{L^2(S_h)}^2 \lesssim h^{1-C\alpha} (\abs{\log h^{\alpha}} + 1)
\end{multline*}
since $e^{-\hphi/h} \leq h^{-C\alpha} e^{-\vphi/h}$ where $C$ is independent of $\alpha$. Choosing $\alpha > 0$ so small that $1-C \alpha > 0$, this goes to zero as $h \to 0$.

\emph{Proof of \eqref{carleman_claim2}.} In $\Omega \smallsetminus S_h$ the coefficient $q_{2,-}$ is close to zero, and we will use that $\tilde{W}_2$ solves the equations 
\begin{align*}
\sigma \cdot (D+A_2) \tilde{W}_{2,+} + q_{2,-} \tilde{W}_{2,-} &= 0, \\
\sigma \cdot (D+A_2) \tilde{W}_{2,-} + q_{2,+} \tilde{W}_{2,+} &= 0,
\end{align*}
which implies that 
\begin{equation*}
(-\Delta + 2A_2 \cdot D + \tilde{q}_2) \tilde{W}_{2,+} + (\sigma \cdot Dq_{2,-}) \tilde{W}_{2,-} = 0.
\end{equation*}
By the definition of $\hat{A}_2(x,D)$ and $\hat{q}_2(x)$, we have on $\Omega \smallsetminus S_h$ 
\begin{multline} \label{claim2_intermediate}
(-\Delta + 2\hat{A}_2(x,D) + \hat{q}_2) W_{+} = (-\Delta + 2 A_2 \cdot D + \tilde{q}_2) W_{1,+} \\
 - (\sigma \cdot Dq_{2,-}) W_{-} + (\sigma \cdot Dq_{2,-}) W_{1,-}. 
\end{multline}
Note that we have written $\tilde{W}_{2,-}$ in terms of $W_-$ and $W_{1,-}$. For the first term on the right hand side of \eqref{claim2_intermediate}, a similar argument as in the proof of \eqref{carleman_claim1} implies 
\begin{equation*}
h^3 \norm{e^{-\hphi/h} (-\Delta + 2 A_2 \cdot D + \tilde{q}_2) W_{1,+}}_{L^2(\Omega)}^2 \lesssim h^{1-C\alpha} = o(1)
\end{equation*}
when $\alpha$ is small enough. Since $W_{1,-}$ has explicit form, the third term satisfies 
\begin{equation*}
h^3 \norm{e^{-\hphi/h} (\sigma \cdot Dq_{2,-}) W_{1,-}}_{L^2(\Omega)}^2 \lesssim h^{3-C\alpha} = o(1).
\end{equation*}

To prove \eqref{carleman_claim2}, it remains to show that 
\begin{multline} \label{carleman_claim2_lastestimate}
h^3 \norm{e^{-\hphi/h} (\sigma \cdot Dq_{2,-}) W_{-}}_{L^2(\Omega \smallsetminus S_h)}^2 \\
 \lesssim o(1) + h (\norm{e^{-\hphi/h} W_+}^2 + \norm{e^{-\hphi/h} hDW_+}^2).
\end{multline}
To this end we will apply the Carleman estimate for a Dirac operator given in Lemma \ref{lemma:carleman_dirac}. This will allow to estimate $W_-$ by $\sigma \cdot (D+A_2)W_-$, which again may be broken into terms involving the explicit solutions $W_{1,\pm}$ and the term $W_+$ which is admissible.

However, the Carleman estimate only applies to functions vanishing on the boundary. One has $W_{-}|_{\Gamma} = 0$ since $U_{1,-} = \tilde{U}_{2,-}$ on $\Gamma$, but $W_-$ could be nonzero on $\Gamma^c$. Here we are saved by the fact that the estimate is over the set $\Omega \smallsetminus S_h$ which has to be a positive distance away from $\Gamma^c$ if $h$ is small, by the assumption that $q_{2,-} \neq 0$ on $\Gamma^c$. Thus, let $V$ be a neighborhood of $\Gamma^c$ in which $q_{2,-} \neq 0$, choose $\chi_0 \in C^{\infty}_c(V)$ with $\chi_0 = 1$ near $\Gamma^c$, and let $\chi = 1-\chi_0$. Then 
\begin{equation*}
h^3 \norm{e^{-\hphi/h} (\sigma \cdot Dq_{2,-}) W_{-}}_{L^2(\Omega \smallsetminus S_h)}^2 \lesssim h^3 \norm{e^{-\hphi/h} \chi W_-}_{L^2(\Omega)}^2.
\end{equation*}
Applying Lemma \ref{lemma:carleman_dirac} to $\chi W_- \in H^1_0(\Omega)^{2 \times 2}$ gives 
\begin{multline*}
\norm{e^{-\hphi/h} \chi W_-}_{L^2(\Omega)}^2 \lesssim \eps(h) \norm{e^{-\hphi/h} (\sigma \cdot (D+A_2)) (\chi W_-)}_{L^2(\Omega)}^2 \\
 \lesssim \eps(h) \norm{e^{-\hphi/h} (\sigma \cdot (D+A_2)) W_-}_{L^2(\Omega)}^2 + \eps(h) \norm{e^{-\hphi/h} (\sigma \cdot D\chi) W_-}_{L^2(\Omega)}^2.
\end{multline*}
We write 
\begin{align*}
\sigma \cdot (D+A_2) W_- &= \sigma \cdot (D+A_1)W_{1,-} - \sigma \cdot (D+A_2)\tilde{W}_{2,-} \\
 & \qquad + \sigma \cdot (A_2-A_1) W_{1,-} \\
 &= -q_{1,+} W_{1,+} + q_{2,+} \tilde{W}_{2,+} + \sigma \cdot (A_2-A_1) W_{1,-} \\
 &= -q_{2,+} W_+ + (q_{2,+}-q_{1,+}) W_{1,+} + \sigma \cdot (A_2-A_1) W_{1,-}.
\end{align*}
Thus 
\begin{align*}
\norm{e^{-\hphi/h} (\sigma \cdot (D+A_2)) W_-} &\lesssim \norm{e^{-\hphi/h} W_+} + \norm{e^{-\hphi/h} W_{1,\pm}} \\
 &\lesssim \norm{e^{-\hphi/h} W_+} + h^{-C\alpha}
\end{align*}
by the explicit form of $W_1$. Finally, since $q_{2,-} \neq 0$ on the support of $\sigma \cdot D\chi$, we have in this set 
\begin{align*}
W_- &= W_{1,-} + \frac{1}{q_{2,-}} \sigma \cdot (D+A_2) \tilde{W}_{2,+} \\
 &= W_{1,-} - \frac{1}{q_{2,-}} \sigma \cdot (D+A_2) W_{+} + \frac{1}{q_{2,-}} \sigma \cdot (D+A_2) W_{1,+} \\
 &= W_{1,-} - \frac{1}{q_{2,-}} \sigma \cdot (D+A_2) W_{+} + \frac{1}{q_{2,-}} \sigma \cdot (A_2-A_1) W_{1,+} - \frac{q_{1,-}}{q_{2,-}} W_{1,-}.
\end{align*} 
Consequently 
\begin{equation*}
\norm{e^{-\hphi/h} (\sigma \cdot D\chi) W_-}_{L^2(\Omega)} \lesssim h^{-C\alpha} + \norm{e^{-\hphi/h}W_+} + \norm{e^{-\hphi/h} D W_+}.
\end{equation*}
Combining these estimates gives 
\begin{multline*}
h^3 \norm{e^{-\hphi/h} (\sigma \cdot Dq_{2,-}) W_{-}}_{L^2(\Omega \smallsetminus S_h)}^2 \\
 \lesssim h^{3-C\alpha} \eps(h) + h \eps(h) (\norm{e^{-\hphi/h} W_+}^2 + \norm{e^{-\hphi/h} hDW_+}^2).
\end{multline*}
This shows \eqref{carleman_claim2_lastestimate} if $\alpha$ is chosen small enough. The proof is complete.
\end{proof}

Next we will prove Lemma \ref{lemma:boundaryterm_electric}, which is used in recovering the electric potentials. The stronger decay of suitable blocks in the boundary integral ($o(h)$ instead of $o(1)$ as in Lemma \ref{lemma:boundaryterm_magnetic}) is due to the fact that $A_1 = A_2 = A$. Otherwise, the proof will be mostly parallel to that of Lemma \ref{lemma:boundaryterm_magnetic}.

\begin{proof}[Proof of Lemma \ref{lemma:boundaryterm_electric}]
The solutions $U_1$ and $U_2$ have the form 
\begin{align*}
U_{1,+} &= e^{\rho/h} e^{i\phi} \left[ -\frac{1}{z} r^{-1/2} a_1 \begin{pmatrix} 0 & \sigma \cdot \zeta \end{pmatrix}_{2 \times 4} + \begin{pmatrix} \hat{R}_1 & \hat{R}_1' \end{pmatrix}_{2 \times 4} \right], \\
U_{2,+}^* &= e^{-\rho/h} e^{-i\phi} \Big[ \frac{1}{z} r^{-1/2} a_2 \begin{pmatrix} 0 \\ \sigma \cdot \zeta \end{pmatrix}_{4 \times 2} + \begin{pmatrix} \hat{R}_2 \\ \hat{R}_2' \end{pmatrix}_{4 \times 2} \Big],
\end{align*}
with $\norm{\hat{R}_j}_{H^1(\Omega)} \lesssim h$, $\norm{\hat{R}_j'}_{H^1(\Omega)} \lesssim h$.

We denote the upper left $2 \times 2$ block of $(\frac{1}{q_{1,-}} \partial_{\nu} U_+|U_2^*)_{\Gamma^c}$ by $J$, and will show that $J = o(h)$. The argument for the upper right block is analogous. If $W_1$, $\tilde{W}_2$ are the left $4 \times 2$ blocks of $U_1$ and $\tilde{U}_2$, respectively, and if $W = W_1 - \tilde{W}_2$, then 
\begin{equation*}
J = \int_{\Gamma^c} e^{-\rho/h} e^{-i\phi} \hat{R}_2 \frac{1}{q_{1,-}} \partial_{\nu} W_+ \,dS.
\end{equation*}
Repeating the argument in the proof of Lemma \ref{lemma:boundaryterm_magnetic}, we obtain the estimate \eqref{j_carleman_estimate}:
\begin{multline*}
\norm{J}^2 + \frac{h}{\varepsilon(h)} (\norm{e^{-\hphi/h} W_+}^2 + \norm{e^{-\hphi/h} hDW_+}^2 ) \\
\lesssim h^3 \norm{e^{-\hphi/h} (-\Delta + \hat{A}_2(x,D) + \hat{q}_2) W_+}^2.
\end{multline*}
Here $\hat{A_2}$ and $\hat{q_2}$ are the coefficients in Lemma \ref{lemma:carleman_hdependent} for $A_2 = A$, $q_{2,-}$, and $\tilde{q}_2 = A \cdot A + D \cdot A - q_{2,+} q_{2,-}$. Let $S_h = \{ x \in \Omega \,;\, \abs{\frac{1}{q_{2,-}}} \leq \sqrt{\log h^{\alpha}} \}$ as before. Then the desired conclusion $J = o(h)$ will be a consequence of the following two estimates:
\begin{eqnarray}
 & h^3 \norm{e^{-\hphi/h} (-\Delta + \hat{A}_2(x,D) + \hat{q}_2) W_+}_{L^2(S_h)}^2 =  o(h^2), & \label{carleman_claim1_e1} \\
 & h^3 \norm{e^{-\hphi/h} (-\Delta + \hat{A}_2(x,D) + \hat{q}_2) W_+}_{L^2(\Omega \smallsetminus S_h)}^2 & \notag \\
 & \lesssim o(h^2) + h (\norm{e^{-\hphi/h} W_+}^2 + \norm{e^{-\hphi/h} hDW_+}^2). & \label{carleman_claim2_e1} 
\end{eqnarray}

\emph{Proof of \eqref{carleman_claim1_e1}.} In $S_h$ one has 
\begin{equation*}
(-\Delta + \hat{A}_2(x,D) + \hat{q}_2) W_+ = (-\Delta + 2A \cdot D - \frac{1}{q_{2,-}}(\sigma \cdot Dq_{2,-}) \sigma \cdot (D+A) + \tilde{q}_2) W_{1,+}.
\end{equation*}
Using that $W_1$ is a solution of the Dirac system with $A_1 = A$, we obtain after some computations that 
\begin{equation*}
(-\Delta + 2A \cdot D) W_{1,+} = M_{1,+} W_{1,+} + M_{1,-} W_{1,-}
\end{equation*}
in $\Omega$, where $M_{1,\pm}$ are smooth matrices in $\closure{\Omega}$ which are uniformly bounded with respect to $h$. Using the Dirac equation again, we have in $S_h$
\begin{equation*}
(-\Delta + \hat{A}_2(x,D) + \hat{q}_2) W_+ = \hat{M}_{1,+} W_{1,+} + \hat{M}_{1,-} W_{1,-},
\end{equation*}
where $\hat{M}_{1,\pm}$ are matrices in $S_h$ satisfying $\norm{\hat{M}_{1,\pm}}_{L^{\infty}(S_h)} \lesssim \sqrt{\abs{\log h^{\alpha}}}$. By the explicit form for $W_{1,\pm}$, one has 
\begin{equation*}
h^3 \norm{e^{-\hphi/h} (-\Delta + \hat{A}_2(x,D) + \hat{q}_2) W_+}_{L^2(S_h)}^2 \lesssim h^{3-C\alpha} \abs{\log h^{\alpha}}.
\end{equation*}
This proves \eqref{carleman_claim1_e1} if $\alpha$ is chosen small enough.

\emph{Proof of \eqref{carleman_claim2_e1}.} In $\Omega \smallsetminus S_h$ we obtain the identity \eqref{claim2_intermediate} where $A_2 = A$:
\begin{multline*}
(-\Delta + 2\hat{A}_2(x,D) + \hat{q}_2) W_{+} = (-\Delta + 2 A \cdot D + \tilde{q}_2) W_{1,+} \\
 - (\sigma \cdot Dq_{2,-}) W_{-} + (\sigma \cdot Dq_{2,-}) W_{1,-}. 
\end{multline*}
As in the proof of \eqref{carleman_claim1_e1}, it follows that 
\begin{equation*}
h^3 \norm{e^{-\hphi/h} (-\Delta + 2 A \cdot D + \tilde{q}_2) W_{1,+}}^2_{L^2(\Omega)} \lesssim h^{3-C\alpha} = o(h^2)
\end{equation*}
for $\alpha$ small. Also, clearly 
\begin{equation*}
h^3 \norm{e^{-\hphi/h} (\sigma \cdot Dq_{2,-}) W_{1,-}}_{L^2(\Omega)}^2 \lesssim h^{3-C\alpha} = o(h^2).
\end{equation*}
Using the cutoff $\chi$ and the Carleman estimate of Lemma \ref{lemma:carleman_dirac} in the same way as when proving \eqref{carleman_claim2}, we have 
\begin{multline*}
h^3 \norm{e^{-\hphi/h} (\sigma \cdot Dq_{2,-}) W_{-}}_{L^2(\Omega \smallsetminus S_h)}^2 \lesssim h^3 \norm{e^{-\hphi/h} \chi W_-}_{L^2(\Omega)}^2 \\
 \lesssim h^3 \eps(h) (\norm{e^{-\hphi/h} (\sigma \cdot (D+A)) W_-}_{L^2(\Omega)}^2 + \norm{e^{-\hphi/h} (\sigma \cdot D\chi) W_-}_{L^2(\Omega)}^2).
\end{multline*}
We note that 
\begin{equation*}
\sigma \cdot (D+A) W_- = -q_{2,+} W_+ + (q_{2,+}-q_{1,+}) W_{1,+} \qquad \text{in } \Omega,
\end{equation*}
and 
\begin{equation*}
W_- = W_{1,-} - \frac{1}{q_{2,-}} \sigma \cdot (D+A) W_{+} - \frac{q_{1,-}}{q_{2,-}} W_{1,-} \quad \text{on } \supp(\sigma \cdot D\chi).
\end{equation*}
This implies that 
\begin{multline*}
h^3 \norm{e^{-\hphi/h} (\sigma \cdot Dq_{2,-}) W_{-}}_{L^2(\Omega \smallsetminus S_h)}^2 \\
 \lesssim h^{3-C\alpha} \eps(h) + h \eps(h)(\norm{e^{-\hphi/h} W_+}^2 + \norm{e^{-\hphi/h} hDW_+}^2).
\end{multline*}
Now $h^{3-C\alpha} \eps(h) = o(h^2)$ for $\alpha$ small, so we have proved \eqref{carleman_claim2_e1}.
\end{proof}

%\end{comment}

%\nocite{*}
%\addcontentsline{toc}{chapter}{Bibliography}
%\bibliography{rough_dirac}

\begin{thebibliography}{10}

\bibitem{astalapaivarinta} K.~Astala, L.~P\"aiv\"arinta, \textit{Calder{\'o}n's inverse conductivity problem in the plane}, Ann. of Math. \textbf{163} (2006), 265--299.

\bibitem{berthier_dirac}
A.~Boutet~de Monvel-Berthier, \emph{An optimal {C}arleman-type inequality for
  the {D}irac operator}, Stochastic processes and their applications in
  mathematics and physics (Bielefeld, 1985), Math. Appl., vol.~61, Kluwer Acad.
  Publ., 1990, pp.~71--94.

\bibitem{bukhgeimuhlmann}
A.~L. Bukhgeim and G.~Uhlmann, \emph{{Recovering a potential from partial
  Cauchy data}}, Comm. PDE \textbf{27} (2002), 653--668.

\bibitem{calderon}
A.~P. Calder{\'o}n, \emph{On an inverse boundary value problem}, Seminar on
  Numerical Analysis and its Applications to Continuum Physics, Soc. Brasileira
  de Matem{\'a}tica, R{\'i}o de Janeiro, 1980.

\bibitem{cos}
P.~Caro, P.~Ola, and M.~Salo, \emph{Inverse boundary value problem for Maxwell equations
with local data}, preprint (2009).

\bibitem{DKSaU}
D.~Dos Santos~Ferreira, C.~E. Kenig, M.~Salo, and G.~Uhlmann,
  \emph{{Limiting Carleman weights and anisotropic inverse problems}}, preprint 
  (2008), arXiv:0803.3508.

\bibitem{dksu}
D.~Dos Santos~Ferreira, C.~E. Kenig, J.~Sj{\"o}strand, and G.~Uhlmann,
  \emph{{Determining a magnetic Schr{\"o}dinger operator from partial Cauchy
  data}}, Comm. Math. Phys. \textbf{271} (2007), 467--488.

\bibitem{eskinralston_elasticity}
G.~Eskin and J.~Ralston, \emph{On the inverse boundary value problem for linear isotropic
  elasticity}, Inverse Problems \textbf{18} (2002), 907--921.

\bibitem{gotodirac}
M.~Goto, \emph{{Inverse scattering problem for Dirac operators with magnetic
  potentials at a fixed energy}}, Spectral and scattering theory and related
  topics (Japanese) (Kyoto), no. 994, 1997, pp.~1--14.

\bibitem{isakov}
V.~Isakov, \emph{On uniqueness in the inverse conductivity problem with local data}, Inverse Probl. Imaging \textbf{1} (2007), no.~1, p.~95--105.

\bibitem{isozakidirac}
H.~Isozaki, \emph{{Inverse scattering theory for Dirac operators}}, Ann. I. H.
  P. Physique Th{\'e}orique \textbf{66} (1997), 237--270.

\bibitem{iuy}
O.~Imanuvilov, G.~Uhlmann, and M.~Yamamoto, \emph{{Global uniqueness from partial Cauchy data in two dimensions}}, preprint (2008), arXiv:0810.2286.

\bibitem{jerison_dirac}
D.~Jerison, \emph{Carleman inequalities for the {D}irac and {L}aplace operators
  and unique continuation}, Adv. in Math. \textbf{62} (1986), no.~2, 118--134.

\bibitem{KSaU}
C.~E.~Kenig, M.~Salo, and G.~Uhlmann, \emph{{Inverse problems for the anisotropic Maxwell equations}}, preprint (2009).

\bibitem{ksu}
C.~E. Kenig, J.~Sj{\"o}strand, and G.~Uhlmann, \emph{{The Calder{\'o}n problem
  with partial data}}, Ann. of Math. \textbf{165} (2007), 567--591.

\bibitem{knudsensalo}
K.~Knudsen and M.~Salo, \emph{Determining nonsmooth first order terms from partial boundary measurements}, Inverse Probl. Imaging \textbf{1} (2007), 349--369.

\bibitem{kurylevlassas}
Y.~Kurylev and M.~Lassas, \emph{{Inverse problems and index formulae for Dirac operators}}, Adv. Math. (to appear).

\bibitem{lidirac}
X.~Li, \emph{{On the inverse problem for the Dirac operator}}, Inverse Problems
  \textbf{23} (2007), 919--932.

\bibitem{mandache_dirac}
N.~Mandache, \emph{Some remarks concerning unique continuation for the {D}irac
  operator}, Lett. Math. Phys. \textbf{31} (1994), no.~2, 85--92.

\bibitem{nachman2d} A.~Nachman, \textit{Global uniqueness for a two-dimensional inverse boundary value problem}, Ann. of Math. \textbf{143} (1996), 71--96.

\bibitem{nakamuratsuchida}
G.~Nakamura and T.~Tsuchida, \emph{{Uniqueness for an inverse boundary value
  problem for Dirac operators}}, Comm. PDE \textbf{25} (2000), 1327--1369.

\bibitem{nakamurauhlmann}
G.~Nakamura and G.~Uhlmann, \emph{Global uniqueness for an inverse boundary
  problem arising in elasticity}, Invent. Math. \textbf{118} (1994), 457--474.

\bibitem{nakamurauhlmannerratum}
G.~Nakamura and G.~Uhlmann, \emph{Erratum: Global uniqueness for an inverse boundary value problem
  arising in elasticity}, Invent. Math. \textbf{152} (2003), 205--207.

\bibitem{ops}
P.~Ola, L.~P\"aiv\"arinta, and E.~Somersalo, \emph{An inverse boundary value problem in electrodynamics}, Duke Math. J. \textbf{70} (1993), p.~617--653.

\bibitem{os}
P.~Ola and E.~Somersalo, \emph{{Electromagnetic inverse problems and
  generalized Sommerfeld potentials}}, SIAM J. Appl. Math. \textbf{56} (1996),
  no.~4, 1129--1145.

\bibitem{salotzou}
M.~Salo and L.~Tzou, \emph{Carleman estimates and inverse problems for Dirac operators}, Math. Ann. (to appear).

\bibitem{sylvesteruhlmann}
J.~Sylvester and G.~Uhlmann, \emph{A global uniqueness theorem for an inverse
  boundary value problem}, Ann. of Math. \textbf{125} (1987), 153--169.

\bibitem{tsuchida}
T.~Tsuchida, \emph{{An inverse boundary value problem for Dirac operators with small potentials}}, Kyushu J. Math. \textbf{52} (1998), 361--382.

\bibitem{uhlmannicm}
G.~Uhlmann, \emph{Inverse boundary value problems for partial differential equations}, {Proceedings of the International Congress of Mathematicians} (Berlin), Doc. Math., vol. III, 1998, pp.~77--86.

\bibitem{uhlmannselecta}
G.~Uhlmann, \emph{{Commentary on Calder{\'o}n's paper (29): ''On an inverse boundary value problem''}}, Selected papers of Alberto P. Calder{\'o}n, edited by A.~Bellow, C.~E. Kenig, and P.~Malliavin, AMS (2008), 623--636.

\end{thebibliography}
%\bibliographystyle{hamsplain}

\providecommand{\bysame}{\leavevmode\hbox to3em{\hrulefill}\thinspace}
\providecommand{\href}[2]{#2}

\end{document}